\numberwithin{equation}{section}
\newtheorem{theorem}{Theorem}[section]
\newtheorem{lemma}[theorem]{Lemma}
\theoremstyle{definition}
\newtheorem{remark}[theorem]{Remark}
\theoremstyle{definition}
\theoremstyle{definition}
\newtheorem{assumption}[theorem]{Assumption}
\def\dashint{\operatorname%
{\,\,\text{\bf--}\kern-.98em\DOTSI\intop\ilimits@\!\!}}
\def\bC{\mathbb{C}}
\def\bR{\mathbb{R}}
\def\bZ{\mathbb{Z}}
\def\cD{\mathcal{D}}
\def\cL{\mathcal{L}}
\def\cM{\mathcal{M}}
\def\cU{\mathcal{U}}
\begin{document}

\title[Stokes system]{$L_q$-estimates for stationary Stokes system with coefficients measurable in one direction}

\author[H. Dong]{Hongjie Dong}
\address[H. Dong]{Division of Applied Mathematics, Brown University, 182 George Street, Providence, RI 02912, USA}

\email{Hongjie\_Dong@brown.edu}

\thanks{H. Dong was partially supported by the NSF under agreements DMS-1056737 and DMS-1600593.}

\author[D. Kim]{Doyoon Kim}
\address[D. Kim]{Department of Mathematics, Korea University, 145 Anam-ro, Seongbuk-gu, Seoul, 02841, Republic of Korea}

\email{doyoon\_kim@korea.ac.kr}

\thanks{D. Kim was supported by Basic Science Research Program through the National Research Foundation of Korea (NRF) funded by the Ministry of Education (2016R1D1A1B03934369).}

\subjclass[2010]{35R05, 76N10, 35B65.}

\keywords{Stokes systems, boundary value problem, measurable coefficients}

\begin{abstract}
We study the stationary Stokes system with variable coefficients in the whole space, a half space, and on bounded Lipschitz domains. In the whole and half spaces, we obtain a priori $\dot W^1_q$-estimates for any $q\in [2,\infty)$ when the coefficients are merely measurable functions in one fixed direction. For the system on bounded Lipschitz domains with a small Lipschitz constant, we obtain a $W^1_q$-estimate and prove the solvability for any $q\in (1,\infty)$ when the coefficients are merely measurable functions in one direction and have locally small mean oscillations in the orthogonal directions in each small ball, where the direction is allowed to depend on the ball.
\end{abstract}

\maketitle

\section{Introduction}

In this paper, we study $W^1_q$-estimates and the solvability of the stationary Stokes system with variable coefficients in the whole space, a half space, and on bounded Lipschitz domains. The regularity theory for the linear Stokes system has important applications in mathematical fluid dynamics, for instance, the Navier-Stokes equations. This theory has been extensively studied over the last fifty years by many authors. For the classical Stokes system with the Laplace operator in smooth domains, i.e.,
\begin{equation}
                        \label{eq9.59}
\begin{cases}
\Delta u+\nabla p=f\quad &\text{in}\,\,\Omega\\
\operatorname{div} u = g\quad &\text{in}\,\,\Omega
\end{cases}
\end{equation}
with the non-homogeneous Dirichlet boundary condition $u=\varphi$ on $\partial\Omega$,
we refer the reader to Lady{\v{z}}enskaya \cite{La59}, Sobolevski{\u\i} \cite{Sob60}, Cattabriga \cite{Ca61}, Vorovi{\v{c}} and Judovi{\v{c}} \cite{VJ61}, and Amrouche and Girault \cite{AG91}. In particular, for any $q\in (1,\infty)$, the following $W^1_q$-estimate was obtained by Cattabriga \cite{Ca61} for the system in a bounded $C^{2}$ domain $\Omega\subset\bR^3$:
$$
\|Du\|_{L_q(\Omega)}+\|p\|_{L_q(\Omega)}\le N\|f\|_{W^{-1}_q(\Omega)}+N\|g\|_{L_q(\Omega)}+
N\|\varphi\|_{W^{1-1/q}_q(\Omega)}.
$$
The proof is based on the explicit representation of solutions using fundamental solutions.
By using a result by Agmon, Douglis, and Nirenberg \cite{ADN64} for elliptic systems together with an interpolation argument, Cattabriga's result was later extended by Amrouche and Girault \cite{AG91} to a bounded $C^{1,1}$ domain $\Omega\subset \bR^d$, for any $d\ge 2$. The system \eqref{eq9.59} on a bounded Lipschitz domain was first studied by Galdi, Simader, and Sohr \cite{GSS94}. They proved $W^1_q$-estimates and solvability under the assumption that the Lipschitz constant of the domain is sufficiently small. The problem was studied by Fabes, Kenig, and Verchota \cite{FKV88} in the case of arbitrary Lipschitz domains with the range of $q$ restricted, using the layer potential method and Rellich identities. For this line of research, see also \cite{Shen95, BS95, MM01, MW12, GK15} and references therein, some of which obtain estimates in Besov spaces.

We are interested in the Stokes system with variable coefficients:
\begin{equation}
                                        \label{eq11.08}
\begin{cases}
\cL u + \nabla p = f + D_\alpha f_\alpha
\quad
&\text{in}\,\,\Omega,
\\
\operatorname{div} u = g
\quad
&\text{in}\,\,\Omega,
\end{cases}
\end{equation}
where $\Omega  \subseteq \bR^d$ and $\cL$ is a strongly elliptic operator, given by
$$
\cL u = D_\alpha \left( A^{\alpha\beta} D_\beta u \right),
\quad A^{\alpha\beta} = [A^{\alpha\beta}_{ij}]_{i,j=1}^d
$$
for $\alpha,\beta =1,\ldots,d$. Here and throughout the paper, we use the Einstein summation convention on repeated indices. Such type of systems were considered by Giaquinta and Modica \cite{MR0641818}, where they obtained various regularity results for both linear and nonlinear Stokes systems when the coefficients are sufficiently regular. Besides its mathematical interests, the system \eqref{eq11.08} is also partly motivated by the study of inhomogeneous fluids with density dependent viscosity (see, for instance, \cite{LS75, Li96, AGZ11}), as well as equations describing flows of shear thinning and shear thickening fluids with viscosity depending on pressure (see, for instance, \cite{FMR05, BMR07}).
See also Dindo{\v{s}} and Mitrea \cite{DM04} for its relation to the Navier-Stokes system in general Riemannian manifolds.

In this paper, we allow coefficients $A^{\alpha\beta}$ to be merely measurable in one direction. In particular, they may have jump discontinuities, so that the system can be used to model, for example, the motion of two fluids with interfacial boundaries. The system \eqref{eq11.08} is considered in the whole space, a half space, and on bounded Lipschitz domains. In the whole and half spaces, we obtain a priori $\dot W^1_q$-estimates for any $q\in [2,\infty)$ in the case that the coefficients are merely measurable functions in one fixed direction (see Theorem \ref{thm0307_1}). For the system on bounded Lipschitz domains with a small Lipschitz constant, we prove a $W^1_q$-estimate and solvability for \eqref{eq11.08} with any $q\in (1,\infty)$, when the coefficients are merely measurable functions in one direction and have locally small mean oscillations in the orthogonal directions in each small ball, with the direction depending on the ball (see Theorem \ref{thm0307_2}). These results extend the aforementioned results from \cite{GSS94} for the classical Stokes system \eqref{eq9.59}, and the recent study \cite{CL15} for \eqref{eq11.08} with coefficients having small mean oscillations in all directions.
We note that the class of coefficients considered in this paper was first introduced by Kim and Krylov \cite{MR2338417} and Krylov \cite{Kr09}, where they established the $W^2_p$-estimate for non-divergence form second-order elliptic equations in the whole space. Subsequently, such coefficients were also treated in \cite{MR2800569,MR2835999} for second- and higher-order elliptic and parabolic systems in regular and irregular domains.

Let us now provide an outline of the proofs of Theorems \ref{thm0307_1} and \ref{thm0307_2}. Our argument is completely different from the methods in \cite{Ca61,AG91,GSS94}, and is based on pointwise sharp and maximal function estimates in the spirit of \cite{Kr05, MR2338417,Kr09} for second-order elliptic equations. Such estimates rely on the $C^{1,\alpha}$ regularity of solutions to the homogeneous system. Here, the main difficulty is that because the coefficients are measurable in one direction, say $x_1$, it is impossible to obtain a H\"older estimate of the full gradient $Du$. To this end, instead of considering $Du$ itself, we exploit an idea given in \cite{MR2800569} to estimate certain linear combinations of $Du$ and $p$:
$$
D_{x'}u\quad \text{and}\quad A^{1\beta}D_\beta u + (p,0,\ldots,0)^{\operatorname{tr}}.
$$
Here and throughout the paper, $D_{x'}u$ denotes the partial derivative of $u$ in the $x_i$ direction, $i=2,\ldots,d$, where $x = (x_1, x') = (x_1, x_2, \ldots,x_d) \in \bR^d$.
For the Stokes system, the presence of the pressure term $p$ gives an added difficulty, because in the usual $L_2$-estimate, instead of $p$ one can only bound $p-(p)$ by $Du$, instead of $p$. See, for instance, Lemma \ref{lem0225_2}. Nevertheless, in Lemma \ref{lem3.6} we show that for the homogeneous Stokes system and any integer $k\ge 1$, the $L_2$-norm of $D_{x'}^k p$ in a smaller ball can be controlled by that of $Du$ in a larger ball. Finally, in order to deal with the system \eqref{eq11.08} in a Lipschitz domain, we apply a version of the Fefferman-Stein sharp function theorem for spaces of homogeneous type, which was recently proved in \cite{DK15} (cf. Lemma \ref{lem7.3}). Furthermore, we employ a delicate cut-off argument, together with Hardy's inequality, which was first used in \cite{MR2835999} and also in the recent paper \cite{CL15}.

In a subsequent paper, we will study {\em weighted} $W^1_q$-estimates and the solvability of the Stokes system \eqref{eq11.08} in more general Reifenberg flat domains, with the same class of coefficients. We note that an {\em a priori} $W^1_q$-estimate in Reifenberg flat domains was obtained in \cite{CL15}, under the condition that the coefficients have sufficiently small mean oscillations in all directions.

The remainder of this paper is organized as follows. We state the main theorems in the following section. Section \ref{sec_aux} contains some auxiliary results, including $L_2$-estimates and Caccioppoli type inequalities. In Section \ref{sec_04}, we prove interior and boundary $L_\infty$ and H\"older estimates for derivatives of solutions, while in Section \ref{sec_05} we establish the interior and boundary mean oscillation estimates for the system in the whole space and in a half space. Section \ref{sec_06} is devoted to the proof of Theorem \ref{thm0307_1}.
Finally, we consider the system \eqref{eq11.08} in a Lipschitz domain with a small Lipschitz constant in Section \ref{sec_07}.

We conclude this section by introducing some notation. We fix a half space to be $\bR^d_+$, defined by
$$
\bR^d_+ = \{ x = (x_1, x') \in \bR^d: x_1 > 0,\, x' \in \bR^{d-1} \}.
$$
Let $B_r(x_0)$ be the Euclidean ball of radius $r$ in $\bR^d$ centered at $x_0\in \bR^d$, and let $B^+_r(x_0)$ be the half ball
$$
B_r^+(x_0) = B_r(x_0) \cap \bR^d_+.
$$
A ball in $\bR^{d-1}$ is denoted by
$$
B'_r(x') = \{ y' \in \bR^{d-1}: |x' - y'| < r\}.
$$
We use the abbreviations
$B_r := B_r(0), B_r^+ := B_r^+(0)$ where $0\in \bR^d$, and $B_r' := B_r'(0)$ where $0 \in \bR^{d-1}$.

For a locally integrable function $f$, we define its average on $\Omega$ by
$$
(f)_\Omega = \frac{1}{|\Omega|} \int_{\Omega} f \, dx=\dashint_{\Omega} f \, dx.
$$

We shall use the following function spaces:
$$
W_q^1(\Omega):=\{f\in L_q(\Omega)\,:\,Df\in L_q(\Omega)\},
\quad
W_q^1(\Omega)^d=\big(W_q^1(\Omega)\big)^d.
$$
Finally, let $\mathring{W}_q^1(\Omega)$ be the completion of $C^\infty_0(\Omega)$ in $W_q^1(\Omega)$, and $\mathring{W}_q^1(\Omega)^d=\big(\mathring W_q^1(\Omega)\big)^d$.

\section{Main results}

In this section, we state our main results and the assumptions required for them.
Throughout this paper, the coefficients $A^{\alpha\beta}$ are bounded and satisfy the strong ellipticity condition, i.e., there exists a constant $\delta \in (0,1)$ such that
$$
|A^{\alpha\beta}| \le \delta^{-1},
\quad
\sum_{\alpha, \beta=1}^d \xi_\alpha \cdot A^{\alpha\beta} \xi_\beta \ge \delta \sum_{\alpha=1}^d |\xi_\alpha|^2
$$
for any $\xi_\alpha \in \bR^d$, $\alpha = 1, \ldots, d$.
Owing to the trace-extension theorem on Lipschitz domains, in the sequel we only consider the homogeneous boundary condition $u=0$ on $\partial \Omega$, without loss of generality.

We say that $(u,p)\in W_q^1(\Omega)^d\times L_q(\Omega)$ is a solution to \eqref{eq11.08} if for any $\psi = (\psi_1, \ldots, \psi_d) \in C_0^\infty(\Omega)^{d}$,
we have that
$$
- \int_\Omega D_\alpha \psi \cdot A^{\alpha\beta} D_\beta u \, dx - \int_\Omega p \, \operatorname{div} \psi \, dx = \int_\Omega f \cdot \psi \, dx - \int_\Omega f_\alpha \cdot D_\alpha \psi \, dx,
$$
where
$$
\operatorname{div} \psi = D_1 \psi_1 + \ldots + D_d \psi_d
\quad
\text{and}
\quad
D_\alpha \psi = (D_\alpha \psi_1, D_\alpha \psi_2, \ldots, D_\alpha \psi_d).
$$

Our first results concern a priori $L_q$-estimates of the Stokes system defined in $\bR^d$ or $\bR^d_+$, when the coefficients $A^{\alpha\beta}$ are merely measurable functions of only $x_1$.
In this case, throughout the paper, we set
\begin{equation}
							\label{eq0328_03}
\cL_0 u = D_\alpha \left( A^{\alpha\beta}(x_1) D_\beta u \right),
\end{equation}
where $A^{\alpha\beta}(x_1) = [A^{\alpha\beta}_{ij}(x_1)]_{i,j=1}^d$.
Note that we do not impose any regularity assumptions on $A^{\alpha\beta}(x_1)$.

\begin{theorem}
							\label{thm0307_1}
Let $q \in [2,\infty)$, and let $\Omega$ be either $\bR^d$ or $\bR^d_+$ and $A^{\alpha\beta} = A^{\alpha\beta}(x_1)$, i.e., $\cL = \cL_0$.
If $(u,p) \in W_q^1(\Omega)^d \times L_q(\Omega)$ satisfies
\begin{equation*}
\begin{cases}
\cL_0 u + \nabla p = D_\alpha f_\alpha
\quad
&\text{in}\,\,\Omega,
\\
\operatorname{div} u = g
\quad
&\text{in}\,\,\Omega,
\\
u = 0
\quad
&\text{on} \,\, \partial \Omega \quad \text{in case} \,\, \Omega = \bR^d_+,
\end{cases}
\end{equation*}
where $f_\alpha, g \in L_q(\Omega)$,
then we have that
\begin{equation}
							\label{eq0307_03}
\|Du\|_{L_q(\Omega)} + \|p\|_{L_q(\Omega)} \le N \left( \|f_\alpha\|_{L_q(\Omega)} + \|g\|_{L_q(\Omega)} \right),
\end{equation}
where $N=N(d,\delta,q)$.
\end{theorem}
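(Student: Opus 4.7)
The plan is to prove \eqref{eq0307_03} via pointwise mean-oscillation and sharp-function estimates, in the spirit of the Krylov-type machinery cited in the introduction, applied not to the pair $(Du,p)$ directly but to the composite quantities
\[
U := D_{x'} u \quad\text{and}\quad V := A^{1\beta} D_\beta u + p\, e_1,
\]
where $e_1 = (1,0,\ldots,0)^{\operatorname{tr}}$. These are the natural unknowns given that the coefficients are merely measurable in $x_1$: although $Du$ and $p$ separately admit no H\"older estimate for the homogeneous system, $U$ and $V$ will.

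My first task is to analyze the homogeneous system $\cL_0 u + \nabla p = 0$, $\operatorname{div} u = 0$. Since $A^{\alpha\beta}$ depends only on $x_1$, the operator $D_{x_j}$ with $j \ge 2$ commutes with the coefficients, so $(D_{x_j} u, D_{x_j} p)$ again solves the homogeneous system. Iterating and combining the resulting $L_2$-energy estimates with Lemma \ref{lem3.6}, which controls $L_2$-norms of tangential derivatives of $p$ by $L_2$-norms of $Du$ on slightly larger balls, yields interior (and, using $u|_{\{x_1=0\}} = 0$, boundary) bounds on all tangential derivatives $D_{x'}^k(u,p)$, and in particular on $D_{x'}^k U$. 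For $V$, the crucial structural identity comes from rewriting the momentum equation as
\[
D_1\bigl(A^{1\beta} D_\beta u + p\, e_1\bigr) = -\sum_{\alpha \ne 1} D_\alpha\bigl(A^{\alpha\beta} D_\beta u\bigr) - \bigl(0,\ D_{x'} p\bigr)^{\operatorname{tr}},
\]
which shows that $D_1 V$ is a linear combination of $D_{x'}$-derivatives of quantities already under control. Combined with direct tangential bounds on $D_{x'} V$, this produces interior and boundary $L_\infty$ and H\"older estimates for $V$, despite the jump discontinuity of the coefficients in $x_1$.

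Next I would implement the standard mean-oscillation decomposition. For a (half-)ball $B_r$ and a larger concentric $B_{\kappa r}$, I split $(u,p) = (v,q_1) + (w,q_2)$, with $(v,q_1)$ solving the homogeneous system on $B_{\kappa r}$ and matching $(u,p)$ on its boundary, and $(w,q_2)$ carrying the right-hand sides with zero boundary data. The H\"older bounds from the previous step control the mean oscillations of the pieces of $U$ and $V$ coming from $(v,q_1)$ on $B_r$ by $\kappa^{-\gamma}$ times a maximal function of $|Du|^2+|p|^2$ on $B_{\kappa r}$, while the $L_2$-estimate of Lemma \ref{lem0225_2} bounds $Dw$ and $q_2-(q_2)_{B_{\kappa r}}$ in $L_2$ by $\|f_\alpha\|_{L_2(B_{\kappa r})}+\|g\|_{L_2(B_{\kappa r})}$. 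Combining these produces the model sharp-function inequality for $U$ and $V$. Applying the Fefferman-Stein sharp-function theorem and the Hardy-Littlewood maximal inequality (both usable for $q\ge 2$ since the base energy estimate is $L_2$) then yields the $L_q$-bound for $U$ and $V$ on $\Omega$; finally $Du$ and $p$ are recovered from $(U,V)$ using the strong ellipticity of $A^{11}$, namely $D_{x'}u = U$, then the first row of $V$ with $D_{x'}u$ replaced by $U$ lets one solve for $D_1 u$, after which $p$ is read off from the first component of $V - A^{1\beta}D_\beta u$.

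The main obstacle I expect is precisely the bundling of the pressure into $V$. In the analogous second-order scalar theory there is only one unknown, and the corresponding argument for measurable-in-$x_1$ coefficients uses $D_{x'} u$ and $A^{1\beta}D_\beta u$ alone. Here the Stokes system forces the presence of $p$, which in the natural $L_2$-identity can only be controlled modulo its mean, as the introduction already highlights. The structural identity for $D_1 V$ above, together with the auxiliary estimate of Lemma \ref{lem3.6} on higher tangential derivatives of $p$, is what keeps this from derailing the argument. A secondary technicality is the half-space case, where $V$ does not vanish on $\{x_1=0\}$ even though $U$ does; this is handled by exploiting $u=0$ on the boundary to reduce the boundary estimates to tangential ones, which is the setup of the boundary Caccioppoli and H\"older estimates to be established in Section \ref{sec_04}.
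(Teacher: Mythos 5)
Your proposal follows essentially the same route as the paper: the same composite quantities ($D_{x'}u$ and $A^{1\beta}D_\beta u + p\,e_1$), tangential difference quotients plus the key estimate of Lemma \ref{lem3.6} on $D_{x'}p$, the identity \eqref{eq0127_01} to control the $x_1$-derivative of the composite quantity, the homogeneous/inhomogeneous splitting for the mean-oscillation estimate, and Fefferman--Stein plus the maximal function theorem with a large-$\kappa$ absorption. Two points in your sketch, however, are not correct as written and need the paper's fixes.

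First, the algebraic recovery of $D_1u$ from $(U,V)$ is misstated. The first component of $V$ contains the unknown $p$, so it cannot be used to solve for $D_1u$; the components $V_2,\ldots,V_d$ give only $d-1$ equations for the $d$ unknowns $D_1u_1,\ldots,D_1u_d$. You must first obtain $D_1u_1$ from the divergence equation $D_1u_1 = g - \sum_{j\ge 2}D_ju_j$, and then invert the $(d-1)\times(d-1)$ minor $[A^{11}_{ij}]_{i,j=2}^d$ (invertible by strong ellipticity) in the relations for $V_i$, $i=2,\ldots,d$, exactly as in \eqref{eq0127_04}; only afterwards is $p$ read off from the first component. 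Second, your parenthetical claim that the sharp-function and maximal inequalities are ``usable for $q\ge 2$'' fails at the endpoint $q=2$: the majorant in the mean-oscillation estimate is $\big(\cM(|Du|^2)\big)^{1/2}$, and bounding its $L_2$-norm would require the maximal operator on $L_1$, which is false. The mean-oscillation machinery covers only $q>2$; the case $q=2$ must be handled separately by the energy method (as in Theorem \ref{thm0427}), which itself requires a nontrivial step to estimate $\|p\|_{L_2}$ on the unbounded domain, since $p$ is not assumed to have zero mean and one must approximate it by compactly supported mean-zero functions to build an admissible test function for the divergence equation. With these two repairs your argument coincides with the paper's proof.
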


\begin{remark}
In Theorem \ref{thm0307_1} we only consider the case that $q \in [2,\infty)$ to simplify the exposition and to present our approach in the most transparent way.
Indeed, if $q = 2$, the theorem holds even with measurable $A^{\alpha\beta}(x)$.
See Theorem \ref{thm0427}.
Thus, in the proof of Theorem \ref{thm0307_1} we focus on the case $q \in (2,\infty)$, the proof of which well illustrates, in the simplest setting, our arguments based on mean oscillation estimates together with the sharp function and the maximal function theorems.
One can prove the other case, with $q \in (1,2)$, by using Theorem \ref{thm0307_2} below. This is discussed in a more general setting with weights in \cite{DK18}.
\end{remark}

Next, when the Stokes system is defined in a bounded Lipschitz $\Omega$ with a small Lipschitz constant, we show that the system is uniquely solvable in $L_q(\Omega)$ spaces.
In this case, we allow coefficients not only to be measurable locally in one direction (near the boundary the direction is almost perpendicular to the boundary of the domain), but also to have small mean oscillations in the other directions.
To present this result, we require the following assumptions.

\begin{assumption}
							\label{assum0620}
For any $x_0 \in \partial \Omega$ and $0 < r \le R_0$,
there is a coordinate system depending on $x_0$ and $r$ such that in the new coordinate system we have
$$
\Omega\cap B_r(x_0) = \{x \in B_r(x_0)\, :\, x_1 >\phi(x')\},
$$
where $\phi: \bR^{d-1} \to \bR$ is a Lipschitz function with
$$
\sup_{\substack{x', \, y' \in B_r'(x_0')\\x' \neq y'}}\frac {|\phi(y')-\phi(x')|}{|y'-x'|}\le \frac{1}{16}.
$$
\end{assumption}

\begin{assumption}[$\gamma, \rho$]
							\label{assum0307_1}
Let $\gamma, \rho \in (0,1/16)$.
There exists $R_1 \in (0, R_0]$ satisfying the following.

\begin{enumerate}

\item For $x_0 \in \Omega$ and $0 < r \le \min\{R_1, \operatorname{dist}(x_0, \partial\Omega)\}$, there is a coordinate system depending on $x_0$ and $r$ such that in this new coordinate system we have that
\begin{equation}
							\label{eq0307_01}
\dashint_{B_r(x_0)} \Big| A^{\alpha\beta}(y_1,y') - \dashint_{B_r'(x_0')} A^{\alpha\beta}(y_1,z') \, dz' \Big| \, dx \le \gamma.
\end{equation}

\item For any $x_0 \in \partial \Omega$ and $0 < r \le R_1$,
there is a coordinate system depending on $x_0$ and $r$ such that in the new coordinate system we have that \eqref{eq0307_01} holds, and
$$
\Omega\cap B_r(x_0) = \{x \in B_r(x_0)\, :\, x_1 >\phi(x')\},
$$
where $\phi: \bR^{d-1} \to \bR$ is a Lipschitz function with
$$
\sup_{\substack{x', \, y' \in B_r'(x_0')\\x' \neq y'}}\frac {|\phi(y')-\phi(x')|}{|y'-x'|}\le \rho.
$$
\end{enumerate}
\end{assumption}

\begin{remark}
Clearly, Assumption \ref{assum0307_1} (2) is stronger than Assumption \ref{assum0620}.
However, we state these two assumptions separately for the following reason.
As seen in Theorem \ref{thm0307_2}, we specify the class of bounded Lipschitz domains for which the results of the theorem hold in terms of the flatness parameter $\rho$.
Thus, having two separate assumptions means that in Theorem \ref{thm0307_2} we specify a subclass of the class of domains satisfying Assumption \ref{assum0620}.
The necessity of such a hierarchy of classes of domains is that when determining the size of $\rho$ in Theorem \ref{thm0307_2}, we need some information about domains and their boundaries.
In particular, the maximal function and sharp function theorems on bounded domains we use in this paper require such information.
Thus, without Assumption \ref{assum0620}, the size of $\rho$ is to be determined by a set of parameters including $R_1$.
In this case, i.e., when $\rho$ is given by $R_1$, even a smooth domain $\Omega$ may not satisfy Assumption \ref{assum0307_1} (2) if $R_1$ is too large for the boundary to have $\rho$ flatness on $\Omega \cap B_{R_1}(x_0)$.
With two assumptions as above, every smooth domain satisfies Assumption \ref{assum0307_1} (2) for any $\rho$ by choosing a sufficiently small $R_1$.
\end{remark}

\begin{theorem}
							\label{thm0307_2}
Let $q ,q_1\in (1,\infty)$ satisfying $q_1 \ge  q d/(q+d)$, $K > 0$, and let $\Omega$ be bounded ($\operatorname{diam}\Omega \le K$).
Then, there exist constants $(\gamma, \rho) = (\gamma, \rho)(d,\delta,R_0, K, q) \in (0,1/16)$ such that, under Assumptions \ref{assum0620} and \ref{assum0307_1} $(\gamma, \rho)$, for $(u,p) \in  W_q^1(\Omega)^d \times L_q(\Omega)$
satisfying $(p)_\Omega = 0$ and
\begin{equation}
							\label{eq0307_04}
\begin{cases}
\cL u + \nabla p = f + D_\alpha f_\alpha
\quad
&\text{in}\,\,\Omega,
\\
\operatorname{div} u = g
\quad
&\text{in}\,\,\Omega,
\\
u = 0
\quad
&\text{on} \,\, \partial \Omega,
\end{cases}
\end{equation}
where $f \in L_{q_1}(\Omega)$, $f_\alpha, g \in L_q(\Omega)$, we have that
\begin{equation}
							\label{eq0328_02}
\|Du\|_{L_q(\Omega)} + \|p\|_{L_q(\Omega)} \le N \left( \|f\|_{L_{q_1}(\Omega)} + \|f_\alpha\|_{L_q(\Omega)} + \|g\|_{L_q(\Omega)} \right),
\end{equation}
where $N>0$ is a constant depending only on $d$, $\delta$, $R_0$, $R_1$, $K$, $q$, and $q_1$.
Moreover, for $f \in L_{q_1}(\Omega)$, $f_\alpha, g \in L_q(\Omega)$ with $(g)_\Omega = 0$, there exists a unique $(u,p) \in W_q^1(\Omega)^d \times L_q(\Omega)$ satisfying $(p)_\Omega = 0$ and \eqref{eq0307_04}.
\end{theorem}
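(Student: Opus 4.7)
The plan is to follow the standard two-step program for this type of result: first establish the a priori bound \eqref{eq0328_02}, then deduce existence and uniqueness via the method of continuity. Before starting, observe that the hypothesis $q_1\ge qd/(q+d)$ together with the Sobolev embedding $\mathring W^1_{q'}(\Omega)\hookrightarrow L_{q_1'}(\Omega)$ gives the dual embedding $L_{q_1}(\Omega)\hookrightarrow W^{-1}_q(\Omega)$, so we can write $f=D_\alpha\tilde f_\alpha$ with $\tilde f_\alpha\in L_q(\Omega)$ and $\|\tilde f_\alpha\|_{L_q}\le N\|f\|_{L_{q_1}}$ (e.g.\ by solving an auxiliary Dirichlet problem for $\Delta$ on $\Omega$). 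Thus it is enough to treat the case $f=0$.

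The heart of the a priori estimate is a sharp-function argument. Following the strategy indicated in the introduction, we work not with $Du$ itself but with the quantities
\[
U:=D_{x'}u,\qquad V:=A^{1\beta}D_\beta u+pe_1,
\]
which are precisely the combinations that enjoy H\"older estimates via Section \ref{sec_04} even though the coefficients are only measurable in the $x_1$-direction. For each ball $B_r(x_0)$, using the coordinate system from Assumption \ref{assum0307_1} we freeze the tangential variables: we compare $(u,p)$ with the solution $(\tilde u,\tilde p)$ of the homogeneous system whose coefficients are the $x'$-averaged ones $\tilde A^{\alpha\beta}(x_1):=\dashint_{B'_r(x_0')}A^{\alpha\beta}(x_1,z')\,dz'$. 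For $(\tilde u,\tilde p)$ we can invoke Theorem \ref{thm0307_1}, the interior and boundary H\"older estimates of Section \ref{sec_04}, and the $L_2$ control of $D_{x'}^k p$ in Lemma \ref{lem3.6}, producing a mean oscillation bound for $(U,V)$ with an error term controlled by $\gamma$ through Assumption \ref{assum0307_1} $(\gamma,\rho)$.

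The main technical difficulty lies at the boundary, for two reasons. First, the pressure does not vanish on $\partial\Omega$, so the $L_2$-inequalities of Section \ref{sec_aux} bound $p-(p)$ by $Du$ but not $p$ itself. To circumvent this we follow \cite{MR2835999, CL15}: a careful cut-off of $p$ near $\partial\Omega$ combined with Hardy's inequality on the (small Lipschitz constant) domain reduces the uncontrolled portion of $p$ to a quantity absorbable into $\|Du\|_{L_q}$, and this is where the smallness of $\rho$ enters through the Hardy constant. Second, the boundary is flattened by a bi-Lipschitz change of coordinates, re-expressing \eqref{eq0307_04} with perturbed coefficients whose additional $x'$-oscillation is of order $\rho$; this is absorbed together with the $\gamma$-error. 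With these two ingredients, one reaches a pointwise sharp-function bound for $(U,V)$ of the form
\[
(|U|+|V|)^{\#}(x)\le N(\gamma+\rho)^{\sigma}\bigl(\cM(|U|^{s}+|V|^{s})(x)\bigr)^{1/s}+\text{data},
\]
for some $\sigma>0$ and $s<q$. The version of the Fefferman-Stein theorem for spaces of homogeneous type in Lemma \ref{lem7.3}, together with the Hardy-Littlewood maximal inequality, then yields $\|U\|_{L_q}+\|V\|_{L_q}\le N(\|f_\alpha\|_{L_q}+\|g\|_{L_q})$ after choosing $(\gamma,\rho)$ sufficiently small to absorb the $\cM$-term. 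Algebraically reconstructing $Du$ and $p-(p)_\Omega$ from $(U,V)$ and using $(p)_\Omega=0$ gives \eqref{eq0328_02}.

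For existence and uniqueness, I would connect $\cL$ to $\Delta$ through the one-parameter family $\cL_\tau:=(1-\tau)\Delta+\tau\cL$, $\tau\in[0,1]$, each member of which satisfies Assumption \ref{assum0307_1} with the same constants, so the a priori estimate holds uniformly in $\tau$. At $\tau=0$ we obtain solvability from the classical Stokes theory on Lipschitz domains with small Lipschitz constant \cite{GSS94}; the method of continuity then produces a solution at $\tau=1$. The inhomogeneous divergence condition with $(g)_\Omega=0$ is reduced to the homogeneous one by subtracting a Bogovskii lift $w\in\mathring W^1_q(\Omega)^d$ with $\operatorname{div}w=g$ and $\|Dw\|_{L_q}\le N\|g\|_{L_q}$, which exists on Lipschitz domains. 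Uniqueness is immediate from the a priori estimate applied to the difference of two solutions. I expect the hardest single step to be the boundary sharp-function estimate, because it requires simultaneously handling the bi-Lipschitz flattening, the cut-off-plus-Hardy mechanism for the pressure, and the careful bookkeeping that keeps the final coefficient of the $\cM$-term genuinely small, so that $(\gamma,\rho)$ can be chosen effectively in terms of $d,\delta,R_0,K,q$.
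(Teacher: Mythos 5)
Your outline reproduces the paper's general strategy (mean--oscillation estimates, the generalized Fefferman--Stein lemma \ref{lem7.3} on the Christ filtration, absorption by choosing $\kappa$ large and $\gamma,\rho$ small, cut-off plus Hardy near the boundary, continuity method for solvability), but it has two genuine gaps. First, the claim that a single sharp-function argument yields \eqref{eq0328_02} for \emph{all} $q\in(1,\infty)$ with an error term $\bigl(\cM(|U|^{s}+|V|^{s})\bigr)^{1/s}$, $s<q$, cannot be realized when $q\le 2$. The comparison estimates are $L_2$-based, and the smallness in $\gamma$ and $\rho$ is extracted by H\"older's inequality applied to $(\bar A-A)Du$, to $(\chi-1)Du$ and to $\chi' u$, which forces the error to be $(\gamma+\rho)^{1/(2\nu)}\bigl(\cM(|Du|^{2\mu})\bigr)^{1/(2\mu)}$ with $\mu>1$; to run the maximal function theorem in $L_{q/(2\mu)}$ one needs $q>2\mu>2$, and moreover a $W^1_q$ solution with $q<2$ does not even have the local $W^1_{2\mu}$ regularity required to state the oscillation estimate (Lemma \ref{lem6.5} assumes $u\in W^1_{2\mu}$ locally). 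The paper therefore proves the case $q>2$ this way and obtains $q\in(1,2)$ by a separate duality argument, solving the adjoint system with transposed coefficients in $L_{q'}$, $q'>2$, and testing it with $u$; this step is missing from your proposal and must be added.

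Second, your choice to carry the pressure inside the sharp-function quantity via $V=A^{1\beta}D_\beta u+p e_1$ on the Lipschitz domain does not survive the boundary localization. The cut-off/Hardy mechanism is applied to $u$ (the term $\chi'u_1$ in the divergence), not to $p$: the pressure is never cut off, and in the thin layer $\{x_1\le \rho R\}$ below the cut-off level the comparison solution vanishes while $p$ is completely uncontrolled there, so a component containing $p$ admits neither the required oscillation bound nor the two-sided comparability $|F|\le F^Q\le H$ with fixed global $F,H$ demanded by Lemma \ref{lem7.3} (recall also that the coordinate system, hence the quantity $V$, changes from ball to ball). The paper avoids this by taking $\cU=(D_{x'}u,\operatorname{div}u,U_2,\ldots,U_d)$ with the $x'$-averaged coefficients, which contains no pressure and is pointwise comparable to $|Du|$ as in \eqref{eq10.54}; only $\|Du\|_{L_q}$ is produced by the sharp-function machinery (together with the lower-order term $R_1^{-d}\kappa^d\|Du\|_{L_1}$ coming from cubes with $\kappa r>R_1$, absorbed via the $L_2$ estimate of Lemma \ref{lem0225_1}), and $\|p\|_{L_q}$ is recovered afterwards by testing the equation with a Bogovskii-type $\psi\in\mathring W^1_{q'}(\Omega)^d$ solving $\operatorname{div}\psi=\eta-(\eta)_\Omega$ and using $(p)_\Omega=0$. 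Your ``reconstruct $Du$ and $p-(p)_\Omega$ algebraically from $(U,V)$'' works in the whole- and half-space setting of Theorem \ref{thm0307_1}, but on the Lipschitz domain you need the paper's pressure-free $\cU$ plus the final duality step for $p$.
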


\section{Auxiliary results}
                            \label{sec_aux}
In this section, we assume that the coefficients $A^{\alpha\beta}$ are measurable functions of $x \in \bR^d$. That is, no regularity assumptions are imposed on $A^{\alpha\beta}$.

We impose the following assumption on a bounded domain $\Omega\subset \bR^d$ in Lemma \ref{lem0225_1} below.

\begin{assumption}
							\label{assum0224_1}
For any $g \in L_2(\Omega)$ such that $\int_{\Omega} g \, dx = 0$, there exist $B g \in \mathring{W}_2^1(\Omega)^d$ and a constant $K_1>0$ depending only on $d$ and $\Omega$ such that
$$
\operatorname{div} Bg = g
\quad
\text{in}
\,\,
\Omega,
\quad
\|D(Bg)\|_{L_2(\Omega)} \le K_1 \|g\|_{L_2(\Omega)}.
$$
\end{assumption}

\begin{remark}
							\label{rem0229_1}
If $\Omega = B_R$ or $\Omega = B_R^+$, it follows from a scaling argument that the constant $K_1$ depends only on the dimension $d$.
If $\Omega$ is a bounded Lipschitz domain satisfying Assumption \ref{assum0620}, then Assumption \ref{assum0224_1} is satisfied with $K_1$ depending only on $d$, $R_0$, and $\operatorname{diam} \Omega$. If $1/16$ in Assumption \ref{assum0620} is replaced by $\rho$ and $\rho \in [0,\rho_0]$, the constant $K_1$ can be chosen so that it depends only on $d$, $R_0$, $\operatorname{diam}\Omega$, and $\rho_0$.
See, for instance, \cite{MR2101215}.
\end{remark}

\begin{lemma}
							\label{lem0225_1}
Let $\Omega \subset \bR^d$ be a bounded domain which satisfies Assumption \ref{assum0224_1}, and $f, f_\alpha, g \in L_2(\Omega)$ with $(g)_\Omega = 0$.
Then, there exists a unique $(u,p) \in W_2^{1}(\Omega)^d \times L_2(\Omega)$ with $(p)_\Omega = 0$ satisfying
$$
\begin{cases}
\cL u + \nabla p = f + D_\alpha f_\alpha
\quad
&\text{in}\,\,\Omega,
\\
\operatorname{div} u = g
\quad
&\text{in}\,\,\Omega,
\\
u = 0\quad &\text{on}\,\,\partial \Omega.
\end{cases}
$$
Moreover, we have that
$$
\|Du\|_{L_2(\Omega)} + \|p\|_{L_2(\Omega)} \le N \left( \|f\|_{L_2(\Omega)} + \|f_\alpha\|_{L_2(\Omega)} + \|g\|_{L_2(\Omega)} \right),
$$
where $N = N(d,\delta, K_1)$.
If
$\Omega = B_R(x_0)$, $x_0 \in \bR^d$, or $\Omega = B^+_R(x_0)$, $x_0 \in \partial \bR^d_+$,
then we have that
$$
\|Du\|_{L_2(\Omega)} + \|p\|_{L_2(\Omega)} \le N \left( R\|f\|_{L_2(\Omega)} + \|f_\alpha\|_{L_2(\Omega)} + \|g\|_{L_2(\Omega)} \right),
$$
where $N=N(d,\delta)$.
\end{lemma}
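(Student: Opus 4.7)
My strategy is the standard Bogovski\u\i{}–Lax–Milgram–de~Rham route, broken into four steps.

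\emph{Step 1: Reduction to $g=0$.} Using Assumption \ref{assum0224_1}, set $w=Bg\in\mathring W_2^1(\Omega)^d$, so that $\operatorname{div}w=g$ and $\|Dw\|_{L_2(\Omega)}\le K_1\|g\|_{L_2(\Omega)}$. Writing $v=u-w$, the problem becomes: find $(v,p)\in\mathring W_2^1(\Omega)^d\times L_2(\Omega)$ with $(p)_\Omega=0$, $\operatorname{div}v=0$, and
$$
\cL v+\nabla p=f+D_\alpha\tilde f_\alpha,\qquad \tilde f_\alpha:=f_\alpha-A^{\alpha\beta}D_\beta w,
$$
so that $\|\tilde f_\alpha\|_{L_2(\Omega)}\le\|f_\alpha\|_{L_2(\Omega)}+\delta^{-1}K_1\|g\|_{L_2(\Omega)}$.

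\emph{Step 2: Existence of $v$ on the divergence-free subspace.} On the closed subspace $\cJ:=\{\psi\in\mathring W_2^1(\Omega)^d:\operatorname{div}\psi=0\}$, consider the bilinear form
$$
a(v,\psi)=\int_\Omega D_\alpha\psi\cdot A^{\alpha\beta}D_\beta v\,dx,
$$
which by strong ellipticity satisfies $a(v,v)\ge\delta\|Dv\|_{L_2(\Omega)}^2$ and is bounded by $\delta^{-1}\|Dv\|_{L_2(\Omega)}\|D\psi\|_{L_2(\Omega)}$. The functional $\psi\mapsto\int f\cdot\psi-\int\tilde f_\alpha\cdot D_\alpha\psi$ is bounded on $\cJ$ by Poincar\'e. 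Lax--Milgram then yields a unique $v\in\cJ$ solving the problem against all $\psi\in\cJ$, with $\delta\|Dv\|_{L_2}^2\le\|f\|_{L_2}\|v\|_{L_2}+\|\tilde f_\alpha\|_{L_2}\|Dv\|_{L_2}$; Poincar\'e then gives the desired energy bound for $Dv$.

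\emph{Step 3: Recovering the pressure $p$.} Define the bounded linear functional $\Lambda$ on $\mathring W_2^1(\Omega)^d$ by
$$
\Lambda(\psi)=\int_\Omega\!D_\alpha\psi\cdot A^{\alpha\beta}D_\beta v\,dx+\int_\Omega\!f\cdot\psi\,dx-\int_\Omega\!\tilde f_\alpha\cdot D_\alpha\psi\,dx.
$$
By Step 2, $\Lambda$ vanishes on $\cJ$; by the de Rham/Ne\v cas argument (which is precisely dual to Assumption \ref{assum0224_1}), there exists a unique $p\in L_2(\Omega)$ with $(p)_\Omega=0$ such that $\Lambda(\psi)=\int_\Omega p\,\operatorname{div}\psi\,dx$ for all $\psi\in\mathring W_2^1(\Omega)^d$. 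This gives the full weak formulation. To bound $\|p\|_{L_2}$, I test with $\psi=B(p)$ from Assumption \ref{assum0224_1}; since $\operatorname{div}(Bp)=p$ and $\|D(Bp)\|_{L_2}\le K_1\|p\|_{L_2}$, combining with Poincar\'e yields
$$
\|p\|_{L_2(\Omega)}^2=\Lambda(Bp)\le N\bigl(\|f\|_{L_2}+\|\tilde f_\alpha\|_{L_2}+\|Dv\|_{L_2}\bigr)\|p\|_{L_2},
$$
which closes the estimate. Uniqueness follows from the same testing scheme applied to the difference of two solutions.

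\emph{Step 4: The ball/half-ball case.} Here Remark \ref{rem0229_1} gives $K_1=K_1(d)$, and scaling of the Poincar\'e inequality gives $\|\phi\|_{L_2(B_R)}\le NR\|D\phi\|_{L_2(B_R)}$ on $B_R$ and $B_R^+$. Consequently the $f$-contribution enters both energy estimates through the factor $R$, while the $f_\alpha$ and $g$ terms do not, yielding the claimed $R$-dependence with $N=N(d,\delta)$.

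The main obstacle is Step 3: building $p$ without any regularity on $A^{\alpha\beta}$. This relies essentially on having the Bogovski\u\i{}-type right inverse of $\operatorname{div}$ from Assumption \ref{assum0224_1}, both for the de Rham identification of $p$ and for closing the $L_2$ bound via $\psi=Bp$; the absence of gradient regularity for $u$ forces one to handle the pressure purely through this duality device.
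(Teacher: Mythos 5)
Your proof is correct and follows the standard route; the paper itself offers no argument for this lemma, simply citing \cite{CL15}, and the proof there is the same one you give: reduce to $g=0$ with the divergence right inverse from Assumption \ref{assum0224_1}, apply Lax--Milgram on the divergence-free subspace, and recover $p$ by the closed-range/de Rham duality, with the pressure bound obtained by testing with $B p$. One point your Steps 2--4 make usefully explicit is that the zero-order term $f$ is absorbed via the Poincar\'e inequality, so the constant in the general estimate depends on the domain through its Poincar\'e constant and not on $K_1$ alone (indeed $K_1$ is scale-invariant for balls), which is exactly why the dilation-invariant factor $R$ must appear in the $B_R$ and $B_R^+$ case.
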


\begin{proof}
See, for instance, \cite[Lemma 3.1]{CL15}.
\end{proof}

As far as a priori estimates are concerned, one can have $\Omega=\bR^d$ or $\Omega = \bR^d_+$ in Lemma \ref{lem0225_1} if $f \equiv 0$.
In this case we do not necessarily need that the integral of $p$ over $\Omega$ is zero.
For completeness and later reference, we  state and prove this result in the theorem below.

Recall that we say that $(u,p) \in W_2^1(\Omega)^d \times L_2(\Omega)$ satisfies, for instance,
$\cL u + \nabla p = 0$ in $\Omega$, if
$$
\int_{\Omega} D_\alpha \psi \cdot A^{\alpha\beta} D_\beta u \, dx + \int_\Omega p\, \operatorname{div} \psi \, dx = 0
$$
for all $\psi \in C_0^\infty(\Omega)^d$.
One can easily see that $C_0^\infty(\Omega)^d$ can be replaced by $\mathring{W}_2^1(\Omega)^d$.

\begin{theorem}
							\label{thm0427}
Let $\Omega$ be either $\bR^d$ or $\bR^d_+$.
If $(u,p) \in W_2^1(\Omega)^d \times L_2(\Omega)$ satisfies
\begin{equation*}
\begin{cases}
\cL u + \nabla p = D_\alpha f_\alpha
\quad
&\text{in}\,\,\Omega,
\\
\operatorname{div} u = g
\quad
&\text{in}\,\,\Omega,
\\
u = 0
\quad
&\text{on} \,\, \partial \Omega \quad \text{in case} \,\, \Omega = \bR^d_+,
\end{cases}
\end{equation*}
where $f_\alpha, g \in L_2(\Omega)$,
then we have that
\begin{equation*}
\|Du\|_{L_2(\Omega)} + \|p\|_{L_2(\Omega)} \le N \left( \|f_\alpha\|_{L_2(\Omega)} + \|g\|_{L_2(\Omega)} \right),
\end{equation*}
where $N=N(d,\delta)$.
\end{theorem}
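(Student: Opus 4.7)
The plan is to decouple the proof into a separate $L_2$-bound on $p$ in terms of $Du$ and $f_\alpha$ only via duality against the divergence, and an energy estimate obtained by testing the equation against $u$ itself; the two are then coupled through a Young-inequality absorption to produce the desired bound.

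For the $p$-bound, I would use the Bogovskii operator $B_R$ on $\Omega \cap B_R$, which for $\Omega \in \{\bR^d, \bR^d_+\}$ is a ball or a half-ball; by Remark \ref{rem0229_1} its operator norm depends only on $d$, so the resulting constants are independent of $R$. Given $h \in L_2(\Omega \cap B_R)$, I would set $\psi = B_R(h - (h)_{\Omega \cap B_R})$ and extend it by zero to $\Omega$; this $\psi$ lies in $\mathring{W}_2^1(\Omega)^d$ and is admissible in the weak formulation. A rearrangement and duality then give
$$
\|p - (p)_{\Omega \cap B_R}\|_{L_2(\Omega \cap B_R)} \le N(\|Du\|_{L_2(\Omega)} + \|f_\alpha\|_{L_2(\Omega)})
$$
with $N = N(d,\delta)$, uniformly in $R$. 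Combining this with the Pythagorean identity
$$
\|p\|_{L_2(\Omega \cap B_R)}^2 = \|p - (p)_{\Omega \cap B_R}\|_{L_2(\Omega \cap B_R)}^2 + (p)_{\Omega \cap B_R}^2 |\Omega \cap B_R|
$$
and the claim that $(p)_{\Omega \cap B_R}^2 |\Omega \cap B_R| \to 0$ as $R \to \infty$ (obtained by splitting $\int_{\Omega \cap B_R} p$ into its mass on a fixed large ball and a tail, each controlled by Cauchy-Schwarz using $p\in L_2(\Omega)$), sending $R \to \infty$ yields
$$
\|p\|_{L_2(\Omega)} \le N(\|Du\|_{L_2(\Omega)} + \|f_\alpha\|_{L_2(\Omega)}).
$$

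For the energy estimate, since $u \in W_2^1(\Omega)^d$ and vanishes on $\partial\Omega$ in the half-space case, $u$ itself lies in $\mathring{W}_2^1(\Omega)^d$ and is an admissible test function. Plugging $\psi = u$ into the weak formulation and using $\operatorname{div} u = g$ together with strong ellipticity yields
$$
\delta \|Du\|_{L_2(\Omega)}^2 \le \|f_\alpha\|_{L_2(\Omega)} \|Du\|_{L_2(\Omega)} + \|p\|_{L_2(\Omega)} \|g\|_{L_2(\Omega)}.
$$
Inserting the $p$-estimate from the previous step and applying Young's inequality to absorb $\|Du\|_{L_2}$ on the left-hand side gives $\|Du\|_{L_2(\Omega)} \le N(\|f_\alpha\|_{L_2(\Omega)} + \|g\|_{L_2(\Omega)})$; feeding this back into the $p$-estimate closes the argument.

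The hard part is the passage to the limit $R \to \infty$ in the Bogovskii duality step. Because $\Omega$ is unbounded, the local Bogovskii construction controls only the oscillation $p - (p)_{\Omega \cap B_R}$, and converting this to a global $L_2$-bound requires genuine use of the a priori hypothesis $p \in L_2(\Omega)$ to force $(p)_{\Omega \cap B_R}^2 |\Omega \cap B_R| \to 0$. This decay is not immediate from Cauchy-Schwarz; it needs a two-part decomposition in which the tail of $p$ is made arbitrarily small in $L_2$ outside a large fixed ball $B_{R_0}$, while the contribution from the fixed core vanishes thanks to the $|\Omega \cap B_R|^{-1/2}$ normalisation.
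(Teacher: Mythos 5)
Your proof is correct, and it follows the same overall skeleton as the paper's: an energy estimate obtained by testing with $u$, a pressure estimate obtained by testing with a solution of a divergence equation whose operator norm is $R$-independent by Remark \ref{rem0229_1}, and a Young-type absorption to close. The genuine difference lies in how the duality step for $p$ copes with the unboundedness of $\Omega$ and the lack of a normalization of $p$. The paper modifies the data: it approximates $p$ in $L_2$ by a compactly supported function $p_\varepsilon$ with $\int_\Omega p_\varepsilon\,dx=0$ (constructed by subtracting a small, spread-out bump so that the correction has $L_2$-norm $\varepsilon/2$), solves $\operatorname{div}\psi=p_\varepsilon$ on a large ball or half ball, and lets $\varepsilon\to 0$, at the price of carrying the error terms $\|p(p-p_\varepsilon)\|_{L_1}$ through the estimate. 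You instead leave $p$ untouched, solve $\operatorname{div}\psi = p-(p)_{\Omega\cap B_R}$ on the exhausting balls, obtain a bound on the oscillation that is uniform in $R$, and recover the full norm from the identity $\|p\|_{L_2(\Omega\cap B_R)}^2=\|p-(p)_{\Omega\cap B_R}\|_{L_2(\Omega\cap B_R)}^2+(p)_{\Omega\cap B_R}^2|\Omega\cap B_R|$ together with the observation that $(p)_{\Omega\cap B_R}^2|\Omega\cap B_R|=\bigl(|\Omega\cap B_R|^{-1/2}\int_{\Omega\cap B_R}p\,dx\bigr)^2\to 0$, which your core-plus-tail Cauchy--Schwarz argument establishes correctly using $p\in L_2(\Omega)$. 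Both routes use the a priori hypothesis $p\in L_2(\Omega)$ in an essential way; yours trades the paper's explicit approximation construction for a limiting argument along the exhaustion, which is arguably cleaner since no auxiliary correction function is needed, while the paper's version keeps all estimates on a single fixed domain and avoids any discussion of convergence of the mean term. The remaining steps (admissibility of $u$ and of the zero-extended $\psi$ as test functions, the ellipticity bound, and the final absorption) match the paper, with only the trivial difference that you insert the pressure bound into the energy inequality rather than absorbing $\varepsilon_0\|p\|_{L_2}^2$ at the end.
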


\begin{proof}
Since $u \in \mathring{W}_2^1(\Omega)^d$, we use $u$ as a test function to obtain that
$$
\int_\Omega D_\alpha u \cdot A^{\alpha\beta} D_\beta u \, dx + \int_\Omega p \, g \, dx = \int_\Omega f_\alpha \cdot D_\alpha u \, dx.
$$
From this, the ellipticity condition, and Young's inequality, we have that
\begin{equation}
							\label{eq0427_02}
\| Du\|_{L_2(\Omega)}^2 \le \varepsilon_0 \|p\|^2_{L_2(\Omega)} + \frac{N}{\varepsilon_0} \|g\|^2_{L_2(\Omega)} + N\|f_\alpha\|_{L_2(\Omega)}^2
\end{equation}
for any $\varepsilon_0 > 0$, where $N=N(d,\delta)$.

Now, for any $\varepsilon>0$, we can find $R> 0$ and $p_\varepsilon \in L_2(\Omega)$ such that $\operatorname{supp} p_\varepsilon \subset B_R$,
$\int_{\Omega} p_{\varepsilon} \, dx = 0$, and
$\|p - p_\varepsilon\|_{L_2(\Omega)} < \varepsilon$.
To do this, because $p \in L_2(\Omega)$, we first find a function $p_1$ with a compact support in $B_{R_1}$ such that
$$
\|p-p_1\|_{L_2(\Omega)} < \varepsilon/2.
$$
If $\int_{\Omega} p_1 \, dx = 0$, we set $R = R_1$ and $p_\varepsilon = p_1$.
Otherwise, set
$$
g = \frac{\varepsilon^2}{4 C} I_{B_{R_2}},
\quad
\text{where}
\quad
C = \int_\Omega p_1\, dx,
$$
and $R_2$ is a positive number satisfying
$$
|B_{R_2}| = 4\frac{C^2}{\varepsilon^2}
\quad
\text{if}
\quad
\Omega = \bR^d,
\quad
|B_{R_2}^+| = 4\frac{C^2}{\varepsilon^2}
\quad
\text{if}
\quad
\Omega = \bR^d_+.
$$
Then, we see that
$$
\int_{\Omega} (p_1- g) \, dx = 0\quad\text{and}
\quad
\int_{\Omega} g^2 \, dx = \varepsilon^2/4.
$$
Thus, it suffices to take $R = \max\{ R_1, R_2\}$ and $p_\varepsilon = p_1 - g$.

Thanks to the fact that $(p_{\varepsilon})_{B_R} = 0$ or $(p_{\varepsilon})_{B_R^+} = 0$, there exists a solution $\psi \in \mathring{W}_2^1(B_R)^d$ or $\psi \in \mathring{W}_2^1(B_R^+)^d$ satisfying the divergence equation
$$
\operatorname{div} \psi = p_\varepsilon
$$
in $B_R$ or $B_R^+$.
Extend $\psi$ to be zero on $\bR^d \setminus B_R$ or $\bR^d_+ \setminus B_R^+$.
Then we have $\psi \in \mathring{W}_2^1(\Omega)^d$ and
\begin{equation}
							\label{eq0427_01}
\|D\psi\|_{L_2(\Omega)} \le N(d) \|p_\varepsilon\|_{L_2(\Omega)}.
\end{equation}

By applying $\psi$ as a test function to the system, we have that
$$
\int_\Omega D_\alpha \psi \cdot A^{\alpha\beta} D_\beta u \, dx + \int_\Omega p \, p_\varepsilon \, dx = \int_\Omega f_\alpha \cdot D_\alpha \psi \, dx.
$$
From this, Young's inequality, and \eqref{eq0427_01}, it follows that
\begin{multline*}
\|p\|_{L_2(\Omega)}^2 \le \|p p_\varepsilon\|_{L_1(\Omega)}
+ \|p(p-p_\varepsilon)\|_{L_1(\Omega)}\\
\le \varepsilon_1 \|p_\varepsilon\|^2_{L_2(\Omega)} + \frac{N}{\varepsilon_1} \|f_\alpha\|_{L_2(\Omega)}^2 + \frac{N}{\varepsilon_1} \|Du\|_{L_2(\Omega)}^2
+ \varepsilon_1 \|p\|^2_{L_2(\Omega)}
+ \frac{N}{\varepsilon_1}\|p-p_\varepsilon\|^2_{L_2(\Omega)}
\end{multline*}
for any $\varepsilon_1 > 0$, where $N=N(d,\delta)$.
Since $\varepsilon > 0$ is arbitrary, from the above inequality we obtain that
$$
\|p\|_{L_2(\Omega)}^2 \le N \|f_\alpha\|_{L_2(\Omega)}^2 + N\|Du\|_{L_2(\Omega)}^2,
$$
which combined with \eqref{eq0427_02} proves the desired inequality.
\end{proof}

In the lemmas below, we do not necessarily have that
$(p)_{B_R} = 0$ or $(p)_{B_R^+} = 0$ unless specified.
We note that by now these lemmas are fairly standard results, and we present them here for the sake of completeness.
See, for instance, \cite{Ka04} and \cite{CL15}, and also \cite{MR0641818} under slightly different conditions on the coefficients.

\begin{lemma}
							\label{lem0225_2}
Let $R>0$.
If $(u,p) \in W_2^1(B_R)^d \times L_2(B_R)$ satisfies
\begin{equation}
							\label{eq0225_06}
\cL u + \nabla p = 0\quad\text{in}\,\,B_R,
\end{equation}
then
\begin{equation}
							\label{eq0225_03}
\int_{B_R} | p - (p)_{B_R} |^2 \, dx \le N \int_{B_R} |Du|^2 \, dx,
\end{equation}
where $N=N(d,\delta)$.
The same estimate holds if $B_R$ is replaced by $B_R^+$.
\end{lemma}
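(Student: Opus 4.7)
The plan is to run the standard Ne\v cas / Bogovski\u\i\ duality argument: find a vector field whose divergence is $p-(p)_{B_R}$, feed it into the weak form of \eqref{eq0225_06}, and read off the estimate.

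More precisely, set $\bar p := p - (p)_{B_R}$, so that $\int_{B_R}\bar p\,dx = 0$. By Assumption \ref{assum0224_1} for $B_R$ (see Remark \ref{rem0229_1}, where $K_1=K_1(d)$ by scaling), there exists $\psi \in \mathring W_2^1(B_R)^d$ with
\begin{equation*}
\operatorname{div}\psi = \bar p \quad \text{in } B_R,
\qquad
\|D\psi\|_{L_2(B_R)} \le N(d)\,\|\bar p\|_{L_2(B_R)}.
\end{equation*}
Since $\psi$ is an admissible test function for \eqref{eq0225_06}, the weak formulation gives
\begin{equation*}
\int_{B_R} D_\alpha\psi \cdot A^{\alpha\beta} D_\beta u\, dx + \int_{B_R} p\, \operatorname{div}\psi\,dx = 0.
\end{equation*}
The zero-mean property of $\bar p$ lets me replace $p$ by $\bar p$ in the pressure integral: $\int_{B_R} p\,\operatorname{div}\psi\,dx = \int_{B_R} p\,\bar p\,dx = \|\bar p\|_{L_2(B_R)}^2$. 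Then the boundedness of $A^{\alpha\beta}$ together with the Cauchy--Schwarz inequality yields
\begin{equation*}
\|\bar p\|_{L_2(B_R)}^2 \le N(d,\delta)\,\|Du\|_{L_2(B_R)}\,\|D\psi\|_{L_2(B_R)} \le N(d,\delta)\,\|Du\|_{L_2(B_R)}\,\|\bar p\|_{L_2(B_R)},
\end{equation*}
and dividing through by $\|\bar p\|_{L_2(B_R)}$ (the case $\bar p\equiv 0$ being trivial) yields \eqref{eq0225_03}.

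For the half-ball case, the only ingredient that needs to be checked is the existence of a Bogovski\u\i\ right inverse of $\operatorname{div}$ on $B_R^+$ with a constant depending only on $d$; this is precisely the content of Remark \ref{rem0229_1} for $\Omega = B_R^+$. The rest of the argument is identical, using $\mathring W_2^1(B_R^+)^d$ test functions.

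The only conceptual point worth flagging is the replacement of $p$ by $\bar p$ in the pressure integral; this is what makes the argument give an estimate on $\|p - (p)_{B_R}\|_{L_2}$ and not on the full $L_2$ norm of $p$, and is the reason one cannot drop the average in the statement. There is no real obstacle here — all the analytic work is hidden in the Bogovski\u\i\ construction, which is already quoted.
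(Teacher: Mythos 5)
Your proposal is correct and is essentially identical to the paper's proof: both solve $\operatorname{div}\psi = p-(p)_{B_R}$ via Remark \ref{rem0229_1} with a constant depending only on $d$, test \eqref{eq0225_06} with $\psi\in \mathring W_2^1$, and absorb $\|D\psi\|_{L_2}$ using the divergence estimate (the paper invokes Young's inequality where you use Cauchy--Schwarz, an immaterial difference). The half-ball case is handled the same way in both arguments.
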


\begin{proof}
We only prove the case with $B_R^+$, because the other case is similar.
By Remark \ref{rem0229_1}, one can find $\psi \in \mathring{W}_2^1(B_R^+)^d$ satisfying
$$
\operatorname{div} \psi = p - (p)_{B_R^+}\quad \text{in}\,\,B_R^+
$$
and
\begin{equation}
							\label{eq0225_07}
\|D \psi\|_{L_2(B_R^+)}
\le N \| p - (p)_{B_R^+} \|_{L_2(B_R^+)},
\end{equation}
where $N = N(d)$.
Then, apply $\psi$ to \eqref{eq0225_06} as a test function, and use Young's inequality and \eqref{eq0225_07}, to get \eqref{eq0225_03} with $B_R^+$ in place of $B_R$.
\end{proof}

\begin{lemma}
							\label{lem0225_3}
Let $0 < r < R$.

\begin{enumerate}

\item
If $(u,p) \in W_2^1(B_R)^d \times L_2(B_R)$ satisfies
\begin{equation}
							\label{eq0224_01}
\left\{
\begin{aligned}
\cL u + \nabla p &= 0
\quad
\text{in}\,\,B_R,
\\
\operatorname{div} u &= 0
\quad
\text{in}\,\,B_R,
\end{aligned}
\right.
\end{equation}
then for any $\varepsilon > 0$, we have that
\begin{equation}
							\label{eq0225_05}
\int_{B_r} |Du|^2 \, dx \le N (R-r)^{-2} \int_{B_R}|u|^2 \, dx + \varepsilon \int_{B_R} |Du|^2 \, dx,
\end{equation}
where $N = N(d,\delta,\varepsilon)$.

\item
If $(u,p) \in W_2^1(B_R^+)^d \times L_2(B_R^+)$ satisfies
\begin{equation}
							\label{eq0225_01}
\left\{
\begin{aligned}
\cL u + \nabla p &= 0
\quad
\text{in}\,\,B_R^+,
\\
\operatorname{div} u &= 0
\quad
\text{in}\,\,B_R^+,
\quad
\\
u&=0 \quad
\text{on}\,\,
B_R \cap \partial\bR^d_+,
\end{aligned}
\right.
\end{equation}
then for any $\varepsilon>0$, we have that \eqref{eq0225_05} holds with $B_r^+$ and $B_R^+$ replacing $B_r$ and $B_R$, respectively, where $N = N(d,\delta, \varepsilon)$.
\end{enumerate}
\end{lemma}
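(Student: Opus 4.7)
The plan is to run the standard Caccioppoli argument, using $\psi=\eta^2 u$ as a test function, where $\eta\in C_0^\infty(B_R)$ is a smooth cutoff with $\eta\equiv 1$ on $B_r$, $0\le \eta\le 1$, and $|D\eta|\le N/(R-r)$. In the half-space case, the fact that $u=0$ on $B_R\cap\partial\bR^d_+$ together with $\eta$ vanishing near $\partial B_R$ guarantees $\psi\in \mathring{W}_2^1(B_R^+)^d$, so the same test function works.

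First I would plug $\psi=\eta^2 u$ into the weak formulation. Because $\operatorname{div} u=0$, we get $\operatorname{div}\psi = 2\eta\, D\eta\cdot u$. The resulting identity is
\begin{equation*}
\int \eta^2 D_\alpha u\cdot A^{\alpha\beta}D_\beta u\, dx
= -2\int \eta\, D_\alpha\eta\, u\cdot A^{\alpha\beta}D_\beta u\, dx
- 2\int p\,\eta\, D\eta\cdot u\, dx.
\end{equation*}
Since $\int \operatorname{div}\psi\, dx = 0$, the pressure term is unchanged if we replace $p$ by $p-(p)_{B_R}$ (resp.\ $p-(p)_{B_R^+}$). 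This is the crucial maneuver: it converts a quantity we have no direct control over into one that is estimated by $\|Du\|_{L_2}$ via Lemma \ref{lem0225_2}.

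Next I would apply ellipticity to the left side and Cauchy--Schwarz/Young's inequality to the two error terms. Using $|D\eta|\le N/(R-r)$, the first error term is bounded by
\begin{equation*}
\varepsilon_1 \int \eta^2|Du|^2\, dx + \frac{N}{\varepsilon_1(R-r)^2}\int_{B_R}|u|^2\, dx,
\end{equation*}
while the pressure term, after invoking Lemma \ref{lem0225_2} to get $\|p-(p)_{B_R}\|_{L_2(B_R)}\le N\|Du\|_{L_2(B_R)}$, is bounded by
\begin{equation*}
\frac{N}{R-r}\|Du\|_{L_2(B_R)}\|u\|_{L_2(B_R)}
\le \varepsilon_2\int_{B_R}|Du|^2\, dx + \frac{N}{\varepsilon_2(R-r)^2}\int_{B_R}|u|^2\, dx.
\end{equation*}
Choosing $\varepsilon_1$ small enough to absorb the $\eta^2|Du|^2$ term into the left side, and $\varepsilon_2=\varepsilon$, yields the claimed inequality on $B_r$.

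The only subtle point (what I would regard as the main obstacle, though it is minor) is the pressure term: one cannot directly bound $\int p\,\eta\, D\eta\cdot u$ because we have no a priori control on $p$ itself, only on oscillations of $p$. The mean-subtraction trick combined with Lemma \ref{lem0225_2} is exactly what closes this gap. The half-space case is handled verbatim, using the half-space version of Lemma \ref{lem0225_2} and noting that the test function $\eta^2 u$ lies in $\mathring{W}_2^1(B_R^+)^d$ thanks to the vanishing trace of $u$ on $B_R\cap\partial\bR^d_+$.
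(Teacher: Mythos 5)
Your proposal is correct and follows essentially the same argument as the paper: the test function $\eta^2 u$, the observation that $\int \operatorname{div}(\eta^2 u)\,dx=0$ allows replacing $p$ by $p-(p)_{B_R}$ (resp.\ $p-(p)_{B_R^+}$), and the use of Lemma \ref{lem0225_2} together with Young's inequality to close the estimate. The only difference is the inconsequential order in which Young's inequality and Lemma \ref{lem0225_2} are applied to the pressure term.
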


\begin{proof}
We only prove the second assertion of the lemma, because the first is the same with obvious modifications.

Set $\eta$ to be an infinitely differentiable function on $\bR^d$, such that
\begin{equation}
							\label{eq0229_05}
0 \le \eta \le 1,
\quad
\eta = 1\,\,\text{on}\,\,B_r,
\quad
\eta =0 \,\,\text{on}\,\,\bR^d \setminus B_R,
\quad
|D\eta| \le N(d) (R-r)^{-1}.
\end{equation}
Then, we apply $\eta^2 u$  to \eqref{eq0225_01} as a test function (because $(\eta^2 u)|_{\partial B_R^+} = 0$), to obtain that
$$
\int_{B_R^+} D_\alpha\left(\eta^2 u\right) \cdot A^{\alpha\beta} D_\beta u \, dx + \int_{B_R^+} p\, \operatorname{div} (\eta^2 u) \, dx = 0.
$$
From this and the fact that $\int_{B_R^+} \operatorname{div} (\eta^2 u) \, dx = 0$, we have that
\begin{align*}
&\int_{B_R^+} \eta D_\alpha u \cdot A^{\alpha\beta} \eta D_\beta u \, dx \\
&= - 2 \int_{B_R^+}  (D_\alpha \eta) u \cdot A^{\alpha\beta} \eta D_\beta u \, dx - \int_{B_R^+} \left( p - (p)_{B_R^+} \right) \operatorname{div} (\eta^2 u) \, dx.
\end{align*}
Together with \eqref{eq0229_05}, the ellipticity condition, Young's inequality, and the fact that $\operatorname{div}u = 0$, this shows that
$$
\int_{B_R^+} \eta^2 |Du|^2 \, dx \le N(d,\delta,\varepsilon)(R-r)^{-2} \int_{B_R^+} |u|^2 \, dx + \varepsilon \int_{B_R^+} |p-(p)_{B_R^+}|^2 \, dx
$$
for any $\varepsilon > 0$.
The desired estimate follows by combining this with Lemma \ref{lem0225_2}, and the fact that $\eta = 1$ on $B_r$.
\end{proof}

\begin{lemma}
							\label{lem0225_4}
Let $0 < r < R$.
\begin{enumerate}
\item If $(u,p) \in W_2^1(B_R)^d \times L_2(B_R)$ satisfies \eqref{eq0224_01}, then we have that
\begin{equation}
							\label{eq0224_02}
\int_{B_r} |Du|^2 \, dx \le N (R-r)^{-2} \int_{B_R} |u|^2 \, dx,
\end{equation}
where $N=N(d,\delta)$.

\item
If $(u,p) \in W_2^1(B_R^+)^d \times L_2(B_R^+)$ satisfies \eqref{eq0225_01}, then we have that \eqref{eq0224_02} holds, with $B_r^+$ and $B_R^+$ replacing $B_r$ and $B_R$, respectively.
\end{enumerate}
\end{lemma}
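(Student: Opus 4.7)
The plan is to derive \eqref{eq0224_02} from Lemma \ref{lem0225_3} by a standard absorption/iteration argument. Lemma \ref{lem0225_3} already provides the right type of Caccioppoli inequality, but with an undesirable remainder term $\varepsilon \int_{B_R}|Du|^2\,dx$ on the right-hand side. The only real work is to eliminate this term while keeping the $(R-r)^{-2}$ factor in front of $\int_{B_R}|u|^2\,dx$, without picking up a constant that blows up.

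The first step is to observe that Lemma \ref{lem0225_3} actually applies to \emph{any} pair $0<s<t\le R$, not only to the specific pair $r,R$. Indeed, since $(u,p)$ satisfies \eqref{eq0224_01} on $B_R$ (resp.\ \eqref{eq0225_01} on $B_R^+$), it also satisfies the same system on the smaller ball $B_t$ (resp.\ $B_t^+$), so Lemma \ref{lem0225_3} yields, for every $\varepsilon>0$ and every $r\le s<t\le R$,
$$
\int_{B_s^{(+)}}|Du|^2\,dx\le N(\varepsilon)(t-s)^{-2}\int_{B_t^{(+)}}|u|^2\,dx+\varepsilon\int_{B_t^{(+)}}|Du|^2\,dx,
$$
where $N(\varepsilon)=N(d,\delta,\varepsilon)$. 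Here I use the superscript $(+)$ to indicate that the interior case and the boundary case are handled identically.

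Next, I fix $\varepsilon=1/2$ (any value strictly less than $1$ works). Writing $h(\rho):=\int_{B_\rho^{(+)}}|Du|^2\,dx$, which is non-negative, monotone, and finite on $[r,R]$ since $Du\in L_2(B_R^{(+)})$, the estimate above becomes
$$
h(s)\le \tfrac{1}{2}h(t)+\frac{A}{(t-s)^2},\qquad r\le s<t\le R,
$$
with $A:=N\int_{B_R^{(+)}}|u|^2\,dx$, because $\int_{B_t^{(+)}}|u|^2\,dx\le \int_{B_R^{(+)}}|u|^2\,dx$.

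Finally, I invoke the standard iteration lemma (see, e.g., Giaquinta--Giusti): if $h:[r,R]\to[0,\infty)$ is bounded and satisfies $h(s)\le\theta h(t)+A(t-s)^{-2}$ for all $r\le s<t\le R$ with $\theta\in[0,1)$, then $h(r)\le C(\theta)A(R-r)^{-2}$. Applied here with $\theta=1/2$, this gives exactly \eqref{eq0224_02}. I do not expect any real obstacle: boundedness of $h$ on $[r,R]$ is immediate from $Du\in L_2$, the iteration lemma is elementary, and the interior and half-ball cases run in parallel because both versions of Lemma \ref{lem0225_3} have the same form.
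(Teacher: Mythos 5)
Your argument is correct, and it is the route the paper explicitly mentions but deliberately declines to take. You apply Lemma \ref{lem0225_3} on all intermediate balls, fix $\varepsilon=1/2$, and absorb via the standard iteration (``hole-filling'') lemma of Giaquinta--Giusti type; this is precisely the ``$\varepsilon$-lemma'' of \cite[Lemma 0.5]{MR0641818} that the authors name at the start of their proof before opting for a self-contained alternative. The paper instead sets dyadic radii $R_k=r+(R-r)(1-2^{-k})$, applies Lemma \ref{lem0225_3} on each pair $(B_{R_k},B_{R_{k+1}})$, multiplies the $k$-th inequality by $\varepsilon^k$ and sums; there the smallness of $\varepsilon$ matters (e.g.\ $\varepsilon=1/8$, so that $4\varepsilon<1$ and the sums converge, which also justifies subtracting the tail), so the paper exploits the full strength of Lemma \ref{lem0225_3}, namely that it holds for \emph{every} $\varepsilon>0$ with $N=N(d,\delta,\varepsilon)$, whereas you only need one fixed $\varepsilon<1$ together with boundedness of $h(\rho)=\int_{B_\rho^+}|Du|^2\,dx$, which you correctly verify. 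Both executions are absorption arguments over a geometric family of radii and both yield $N=N(d,\delta)$; yours is shorter modulo the cited iteration lemma, while the paper's is self-contained and avoids invoking an external result whose proof (a geometric sequence of radii with ratio chosen so that $\theta\lambda^{-2}<1$) is in any case of the same nature. Your observation that the interior and half-ball cases run in parallel, and that the boundary condition persists on smaller half-balls so Lemma \ref{lem0225_3} applies there, matches the paper's treatment.
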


\begin{proof}
To prove the lemma one may use the so called $\varepsilon$-lemma given in \cite[Lemma 0.5]{MR0641818}.
However, in this case, it is easier to employ the following well-known argument (see, for instance, the proof of \cite[Lemma 4.2]{MR2338417}).
Set
$$
R_0 = r,
\quad
R_k = r + (R-r)(1-2^{-k}),
\quad k = 1, 2, \ldots.
$$
Then, by Lemma \ref{lem0225_3} we have that
$$
\int_{B_{R_k}} |Du|^2 \, dx \le N  \frac{4^k}{(R -r)^2} \int_{B_{R_{k+1}}} |u|^2 \, dx + \varepsilon \int_{B_{R_{k+1}}} |Du|^2 \, dx,
\quad
k = 0, 1, 2, \ldots,
$$
for any $\varepsilon > 0$, where $N=N(d,\delta,\varepsilon)$.
By multiplying both sides of the above inequality by $\varepsilon^k$ and summing the terms with respect to $k = 0, 1, \ldots$, we obtain that
$$
\sum_{k=0}^\infty \varepsilon^k\int_{B_{R_k}} |Du|^2 \, dx \le \frac{N}{(R-r)^2} \sum_{k=0}^\infty (4\varepsilon)^k \int_{B_{R_{k+1}}} |u|^2 \, dx + \sum_{k=1}^\infty \varepsilon^k\int_{B_{R_k}} |Du|^2 \, dx,
$$
where each  summation is finite upon choosing, for instance, $\varepsilon = 1/8$.
Since the first summation on the right-hand side of the above inequality is bounded by $\int_{B_R}|u|^2 \, dx$, we can arrive at \eqref{eq0224_02} by subtracting $\sum_{k=1}^\infty \varepsilon^k \int_{B_{R_k}} |Du|^2 \, dx$ from both sides of the above inequality.
The other case for half balls is proved in the same way.
\end{proof}

\section{$L_\infty$ and H\"{o}lder estimates}							\label{sec_04}

In this section, we prove $L_\infty$ and H\"older estimates of certain linear combinations of $Du$ and $p$, which are crucial for proving our main results.
Recall the operator $\cL_0$ given in \eqref{eq0328_03}, where the coefficients are functions of $x_1$ only.
In this case, if a sufficiently smooth $(u,p)$ satisfies $\cL_0 u + \nabla p = 0$ in $\Omega \subset \bR^d$, we see that
\begin{equation}
							\label{eq0127_01}
D_1 \left( A^{1\beta} D_\beta u +
\left[
\begin{matrix}
p
\\
0
\\
\vdots
\\
0
\end{matrix}
\right]
\right)
= - \sum_{\alpha \ne 1} A^{\alpha\beta} D_{\alpha\beta} u -
\left[
\begin{matrix}
0
\\
D_2 p
\\
\vdots
\\
D_d p
\end{matrix}
\right]
\end{equation}
in $\Omega$.
Set $U = \left(U_1, U_2, \ldots, U_d\right)^{\operatorname{tr}}$, where
\begin{equation}
							\label{eq0229_10}
U_1 = \sum_{j=1}^d \sum_{\beta=1}^d A_{1j}^{1\beta} D_\beta u_j + p,
\quad
U_i = \sum_{j=1}^d \sum_{\beta=1}^d A_{ij}^{1\beta} D_\beta u_j,
\quad
i = 2, \ldots, d.
\end{equation}
That is,
$$
U = A^{1\beta}D_\beta u + (p,0,\ldots,0)^{\operatorname{tr}}.
$$

Here and throughout we write $D D_{x'}^k u$, $k=0,1, \ldots$, to denote $D^\vartheta u$,
where $\vartheta$ is a multi-index such that $\vartheta = (\vartheta_1, \ldots, \vartheta_d)$ with $\vartheta_1 = 0, 1$ and $|\vartheta| = k+1$.

\begin{lemma}
							\label{lem0301_1}
Let $0 < r < R$, and let $\ell$ be a constant.
\begin{enumerate}
\item If $(u,p) \in W_2^1(B_R)^d \times L_2(B_R)$ satisfies
\begin{equation}
							\label{eq0229_01}
\left\{
\begin{aligned}
 \cL_0 u + \nabla p &= 0
\quad
\text{in}\,\,B_R,
\\
\operatorname{div} u &= \ell
\quad
\text{in}\,\,B_R,
\end{aligned}
\right.
\end{equation}
then $DD_{x'} u \in L_2(B_r)$, and
\begin{equation}
							\label{eq0301_04}
\int_{B_r} |DD_{x'}u|^2 \, dx \le N(R-r)^{-2} \int_{B_R} |Du|^2 \, dx,
\end{equation}
where $N=N(d,\delta)$.

\item
If $(u,p) \in W_2^1(B_R^+)^d \times L_2(B_R^+)$ satisfies
\begin{equation}
							\label{eq0229_02}
\left\{
\begin{aligned}
\cL_0 u + \nabla p &= 0
\quad
\text{in}\,\,B_R^+,
\\
\operatorname{div} u &= \ell
\quad
\text{in}\,\,B_R^+,
\\
u &= 0
\quad
\text{on}\,\, B_R \cap \partial \bR^d_+,
\end{aligned}
\right.
\end{equation}
then $DD_{x'}u \in L_2(B_r^+)$, $D_{x'} u = 0$ on $B_r \cap \partial \bR^d_+$,
and
\eqref{eq0301_04} is satisfied with $B_r^+$ and $B_R^+$ replacing $B_r$ and $B_R$, respectively.
\end{enumerate}
\end{lemma}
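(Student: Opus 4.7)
The key observation is that since the coefficients $A^{\alpha\beta}(x_1)$ depend only on $x_1$, the system is translation-invariant in each tangential direction $x_i$ for $i=2,\dots,d$. I will exploit this via the standard tangential difference quotient technique and then apply the Caccioppoli inequality from Lemma \ref{lem0225_4}.

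For each $i\in\{2,\dots,d\}$ and small $h\neq 0$ with $|h|<(R-r)/2$, set
$$\Delta_h^i f(x) := \bigl(f(x+he_i)-f(x)\bigr)/h.$$
Since $A^{\alpha\beta}(x_1)$ is invariant under translations in $x_i$ and $\ell$ is constant, a direct manipulation of the weak formulation shows that $(\Delta_h^i u,\Delta_h^i p)$ belongs to $W_2^1\times L_2$ on $B_{R-|h|}$ (resp. $B_{R-|h|}^+$) and satisfies the same system: $\cL_0(\Delta_h^i u)+\nabla(\Delta_h^i p)=0$ with $\operatorname{div}(\Delta_h^i u)=0$. In the half space case of (2), because $e_i$ is tangent to $\partial\bR^d_+$, the boundary condition $u=0$ on $B_R\cap\partial\bR^d_+$ is preserved: $\Delta_h^i u=0$ on $B_{R-|h|}\cap\partial\bR^d_+$.

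Next, I apply Lemma \ref{lem0225_4} to $(\Delta_h^i u,\Delta_h^i p)$ on the nested pair $B_r\subset B_{(R+r)/2}$ (or the corresponding half balls). Combined with the standard bound $\|\Delta_h^i u\|_{L_2(B_{(R+r)/2})}\le \|D_{x_i}u\|_{L_2(B_R)}$ (valid for $|h|<(R-r)/2$), this yields
$$\int_{B_r}|D\Delta_h^i u|^2\,dx\le N(R-r)^{-2}\int_{B_{(R+r)/2}}|\Delta_h^i u|^2\,dx\le N(R-r)^{-2}\int_{B_R}|Du|^2\,dx,$$
with $N=N(d,\delta)$, uniformly in $h$. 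By the standard equivalence between uniform $L_2$ bounds on difference quotients and $L_2$ integrability of the corresponding weak derivative, it follows that $D_{x_i}(Du)=DD_{x_i}u\in L_2(B_r)$, and passing to the weak limit $h\to 0$ (using lower semicontinuity of the $L_2$-norm) gives exactly \eqref{eq0301_04} after summing over $i=2,\dots,d$.

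For the boundary claim in (2), the identity $u=0$ on $B_R\cap\partial\bR^d_+$, together with the fact that $D_{x_i}$ for $i\ge 2$ is tangential to $\partial\bR^d_+$, immediately gives $D_{x_i}u=0$ on $B_r\cap\partial\bR^d_+$ in the trace sense once we know $D_{x_i}u\in W_2^1(B_r^+)$. The main technical point is the justification that $(\Delta_h^i u,\Delta_h^i p)$ is genuinely a weak solution of the same system on the smaller domain; this is where the assumption that $A^{\alpha\beta}$ depends only on $x_1$ is essential (a derivative in a direction in which coefficients vary would produce an additional commutator term). Everything else is a bookkeeping exercise on top of the already-established Caccioppoli estimate.
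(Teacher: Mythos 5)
Your proposal is correct and follows essentially the same route as the paper: tangential difference quotients $\delta_{j,h}$ ($j=2,\dots,d$), which solve the same homogeneous system with zero divergence and preserved boundary condition precisely because $A^{\alpha\beta}$ depends only on $x_1$ and $\ell$ is constant, followed by the Caccioppoli estimate of Lemma \ref{lem0225_4} on nested (half) balls and the standard passage from uniform-in-$h$ bounds to the weak derivative. Your treatment of the trace statement $D_{x'}u=0$ on $B_r\cap\partial\bR^d_+$ is also consistent with the paper's argument.
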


\begin{proof}
We only deal with the second assertion here.
Set $\delta_{j,h} f$ to be the difference quotient of $f$ with respect to $x_j$, i.e.,
$$
\delta_{j,h} f(x) = \frac{f(x+e_j h) - f(x)}{h},
$$
and let $R_1 = (R+r)/2$.
Then, since the coefficients are functions of $x_1$ only, we have for $0 < h < (R-r)/2$ that
\begin{equation}
							\label{eq0301_03}
\left\{
\begin{aligned}
\cL_0 (\delta_{j,h} u) + \nabla (\delta_{j,h} p) &= 0 \quad \text{in} \,\, B_{R_1}^+,
\\
\operatorname{div} (\delta_{j,h} u) &= 0
\quad \text{in} \,\,B_{R_1}^+
\\
\delta_{j,h} u &= 0 \quad \text{on} \,\,
B_{R_1} \cap \partial \bR^d_+,
\end{aligned}
\right.
\end{equation}
where $j=2,\ldots,d$.
By applying Lemma \ref{lem0225_4} to \eqref{eq0301_03}, we have that
$$
\int_{B_r^+} |D (\delta_{j,h}u)|^2 \, dx \le N(R-r)^{-2} \int_{B_{R_1}^+} |\delta_{j,h}u|^2 \, dx,
$$
which we can combine with the standard finite difference argument to imply the desired conclusion.
\end{proof}

To estimate $U$, we also need to bound $D_{x'} p \in L_2(B_r)$, as in the following key lemma.

\begin{lemma}
                            \label{lem3.6}
Let $0 < r < R$, and let $\ell$ be a constant.

\begin{enumerate}

\item
If $(u,p) \in W_2^1(B_R)^d \times L_2(B_R)$ satisfies
\eqref{eq0229_01} in $B_R$, then $D_{x'}p \in L_2(B_r)$ and
\begin{equation}
							\label{eq0229_04}
\int_{B_r}|D_{x'}p|^2\,dx\le N(R-r)^{-2} \int_{B_R}|Du|^2\,dx,
\end{equation}
where $N = N(d,\delta)>0$.

\item If $(u,p) \in W_2^1(B_R^+)^d \times L_2(B_R^+)$ satisfies \eqref{eq0229_02} in $B_R^+$, then $D_{x'}p \in L_2(B_r^+)$, and \eqref{eq0229_04} is satisfied with $B_r^+$ and $B_R^+$ replacing $B_r$ and $B_R$, respectively.
\end{enumerate}
\end{lemma}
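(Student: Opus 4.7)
The plan is to exploit the translation invariance of the coefficients $A^{\alpha\beta}=A^{\alpha\beta}(x_1)$ in the $x'=(x_2,\ldots,x_d)$ directions. For each $i\in\{2,\ldots,d\}$ and $|h|$ small, the translated pair $(u(\cdot+he_i),p(\cdot+he_i))$ solves the same homogeneous Stokes system (with the same constant divergence $\ell$), so the difference quotient $(\delta_{i,h}u,\delta_{i,h}p)$ solves
\begin{equation*}
\cL_0(\delta_{i,h}u)+\nabla(\delta_{i,h}p)=0,\quad \operatorname{div}(\delta_{i,h}u)=0\quad\text{in }B_{R-|h|},
\end{equation*}
since $\delta_{i,h}\ell=0$. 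For intermediate radii $r<R'<R$, applying Lemma~\ref{lem0225_2} and Lemma~\ref{lem0225_4} to this difference-quotient pair and using $\|\delta_{i,h}u\|_{L_2}\le\|D_iu\|_{L_2(B_R)}$ for $|h|$ small would give the uniform-in-$h$ bound
\begin{equation*}
\|\delta_{i,h}p-(\delta_{i,h}p)_{B_{R'}}\|_{L_2(B_{R'})}\le N\|D\delta_{i,h}u\|_{L_2(B_{R'})}\le N(R-R')^{-1}\|Du\|_{L_2(B_R)}.
\end{equation*}

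Next I would pass to the weak $L_2$-limit: extract a subsequence $\delta_{i,h_n}p-(\delta_{i,h_n}p)_{B_{R'}}\rightharpoonup q$ in $L_2(B_{R'})$ with $(q)_{B_{R'}}=0$, and test against mean-zero $\varphi\in C_c^\infty(B_{R'})$; the resulting identity $\int \varphi\,q = \langle D_ip,\varphi\rangle$ forces $D_ip=q+c$ in $B_{R'}$ for some constant $c\in\bR$. This establishes $D_ip\in L_2(B_{R'})$ together with the \emph{oscillation} estimate $\|D_ip-(D_ip)_{B_{R'}}\|_{L_2(B_{R'})}\le N(R-R')^{-1}\|Du\|_{L_2(B_R)}$, and in particular lets one view $(D_iu,D_ip)$ as a genuine weak solution of the homogeneous Stokes system in $B_{R'}$.

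The hard part will be promoting this oscillation bound to the full $L_2$-bound \eqref{eq0229_04}, i.e.\ controlling the mean $(D_ip)_{B_{R'}}$. My plan is to multiply the system for $(D_iu,D_ip)$ by a cut-off $\eta$ with $\eta\equiv1$ on $B_r$ and $\operatorname{supp}\eta\subset B_{R_1}$ (with $R_1=(R+r)/2$), and apply Lemma~\ref{lem0225_1} on $B_{R_1}$ after normalizing the pressure to have zero mean. Commuting $\cL_0$ and $\nabla$ with $\eta$ produces forcings involving $D\eta\cdot D(D_iu)$, $D\eta\cdot D_iu$, and $(D_ip)\nabla\eta$; the derivatives $D(D_iu)$ are controlled by Lemma~\ref{lem0301_1}, while the dangerous term $(D_ip)\nabla\eta$ is rewritten via the identity
\begin{equation*}
(D_ip)(D_j\eta)=D_i\bigl((D_j\eta)(p-(p)_{B_R})\bigr)-(D_{ij}\eta)(p-(p)_{B_R}),
\end{equation*}
so that only the mean-subtracted pressure $p-(p)_{B_R}$ appears, which is bounded on $B_R$ by Lemma~\ref{lem0225_2} applied to the original pair $(u,p)$. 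This should yield an inequality of the form $\|D_ip\|_{L_2(B_r)}\le N(R-r)^{-2}\|Du\|_{L_2(B_R)}+\varepsilon\|D_ip\|_{L_2(B_{R_1})}$ with a recursive coefficient $\varepsilon$ that can be made small by a suitable choice of $\eta$; a standard hole-filling iteration, relying on the already-established finiteness of $\|D_ip\|_{L_2(B_{R''})}$ for every $R''<R$, then closes the argument and produces \eqref{eq0229_04}.

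For the half-ball case (2), essentially the same proof goes through: since $i\ge 2$, the difference quotient $\delta_{i,h}$ preserves the homogeneous Dirichlet condition on $B_R\cap\partial\bR^d_+$, so $(\delta_{i,h}u,\delta_{i,h}p)$ solves the homogeneous Stokes system in $B_{R-|h|}^+$ with $\delta_{i,h}u=0$ on the flat boundary. The half-space versions of the auxiliary results (Lemma~\ref{lem0225_2} on $B_R^+$, Lemma~\ref{lem0225_4}(2), Lemma~\ref{lem0301_1}(2), and Lemma~\ref{lem0225_1} on $B_{R_1}^+$) replace their interior counterparts verbatim in the argument outlined above.
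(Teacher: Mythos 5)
Your opening steps are fine and coincide with the paper's starting point: for $i\ge 2$ the pair $(\delta_{i,h}u,\delta_{i,h}p)$ solves the homogeneous system (and preserves the Dirichlet condition on the flat boundary in case (2)), and Lemmas \ref{lem0225_2}, \ref{lem0225_4}, \ref{lem0301_1} give a uniform-in-$h$ oscillation bound, hence $D_ip\in L_2$ locally together with $\|D_ip-(D_ip)_{B_{R'}}\|_{L_2(B_{R'})}\le N(R-R')^{-1}\|Du\|_{L_2(B_R)}$. The gap is exactly at the step you yourself flag as the hard part. Localizing $(D_iu,D_ip)$ with a cut-off and invoking Lemma \ref{lem0225_1} ``after normalizing the pressure to have zero mean'' cannot control the mean: for the localized problem the pressure is determined by its data only up to an additive constant, and the estimate of Lemma \ref{lem0225_1} applies precisely to the mean-zero representative. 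So this route returns only a bound on $\|\eta D_ip-(\eta D_ip)_{B_{R_1}}\|_{L_2(B_{R_1})}$, i.e.\ yet another oscillation estimate; the inequality you claim, with $\|D_ip\|_{L_2(B_r)}$ on the left and only absorbable terms on the right, does not follow, and no hole-filling iteration can recover it, since nothing in the localized data distinguishes $\eta D_ip$ from $\eta D_ip$ plus a constant.

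What must be used—and what the paper builds in from the outset—is that $D_ip$ is the derivative of $p-(p)_{B_R}$, whose $L_2$-norm is already controlled by Lemma \ref{lem0225_2}. In the paper this enters through the test function: one solves $\operatorname{div}\psi=\delta_{j,h}\bigl((p-(p)_{B_R^+})\eta^2\bigr)$ via Remark \ref{rem0229_1} and tests the equation for $(\delta_{j,h}u,\delta_{j,h}p)$ with $\psi$, so the pairing produces $\int(\delta_{j,h}p)^2\eta^2\,dx$ directly, with errors involving only $p-(p)_{B_R^+}$ and $DD_{x'}u$; no unresolved mean ever appears, and the full bound on the localized $\delta_{j,h}p$ is obtained uniformly in $h$ without any weak-limit bookkeeping. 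Your argument can be repaired more cheaply by adding the one observation you omit: since $D_ip=D_i\bigl(p-(p)_{B_R}\bigr)$,
\begin{equation*}
\Bigl|\int \eta^2\, D_ip\,dx\Bigr|=\Bigl|\int D_i(\eta^2)\,\bigl(p-(p)_{B_R}\bigr)\,dx\Bigr|\le N(R-r)^{-1}|B_R|^{1/2}\|Du\|_{L_2(B_R)},
\end{equation*}
which bounds the (weighted) mean of $D_ip$; combined with your oscillation estimate this yields \eqref{eq0229_04}. Without such a step, the proof as written is incomplete.
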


\begin{proof}
We only prove the second assertion here.
Define $\delta_{j,h} f$ as in the proof of Lemma \ref{lem0301_1}, and set $R_1 = (2R+r)/3$. Then, for $0 < h < (R-r)/3$, we have that
\begin{equation}
							\label{eq0301_02}
\cL_0 \left(\delta_{j,h} u \right) + \nabla \left(\delta_{j,h} p\right) = 0
\end{equation}
in $B_{R_1}^+$, where $j=2,\ldots,d$.
Set $R_2 = (R+2r)/3$, and let $\eta$ be an infinitely differentiable function on $\bR^d$ such that
\begin{align*}
&0 \le \eta \le 1,
\quad
\eta = 1\,\,\text{on}\,\,B_r,
\quad
\eta =0 \,\,\text{on}\,\,\bR^d \setminus B_{R_2},\\
&\text{and}\quad |D\eta| \le N(d)(R_2 - r)^{-1} = N(d) (R-r)^{-1}.
\end{align*}
Find a function $\psi \in \mathring{W}_2^1(B_{R_1}^+)^d$ such that
$$
\operatorname{div} \psi = \delta_{j,h} \left( (p - c)\eta^2 \right)
$$
in $B_{R_1}^+$,
where $c := (p)_{B_R^+}$.
Note that
$$
\int_{B_{R_1}^+}\delta_{j,h} \left( (p - c)\eta^2 \right) \, dx = 0.
$$
Then, by Remark \ref{rem0229_1} we have that
$$
\|D\psi\|_{L_2(B_{R_1}^+)} \le N(d) \| \delta_{j,h} \left( (p - c)\eta^2 \right) \|_{L_2(B_{R_1}^+)}.
$$
Since
$$
\delta_{j,h} \left( (p - c)\eta^2 \right)
= \eta^2(x) ( \delta_{j,h}p )(x) + (p - c)(x+h) (\delta_{j,h} \eta^2)(x),
$$
it follows that
\begin{equation}
							\label{eq0302_01}
\|D\psi\|_{L_2(B_{R_1}^+)} \le N \|(\delta_{j,h}p) \eta\|_{L_2(B_{R_1}^+)} + N(R-r)^{-1}\|p-c\|_{L_2(B_R^+)}.
\end{equation}
Then, applying $\psi$ to \eqref{eq0301_02} as a test function, we have that
$$
\int_{B_{R_1}^+} \left(\delta_{j,h} p\right) \delta_{j,h}  \left( (p-c)\eta^2 \right) \, dx = - \int_{B_{R_1}^+} D_\alpha \psi \cdot A^{\alpha\beta} D_{\beta}\left(\delta_{j,h} u \right) \, dx.
$$
Thus, we have that
\begin{align*}
&\int_{B_{R_1}^+} \left(\delta_{j,h} p\right)^2  \eta^2 \, dx
= - \int_{B_{R_1}^+} D_\alpha \psi \cdot A^{\alpha\beta} D_{\beta}\left(\delta_{j,h} u \right) \, dx\\
&\quad - \int_{B_{R_1}^+} (\delta_{j,h} p)(x) (p-c)(x+h) (\delta_{j,h} \eta)(x)  \left( \eta(x+h) + \eta(x) \right) \, dx.
\end{align*}
By Young's inequality and \eqref{eq0302_01}, we have for any $\varepsilon \in (0,1)$ that
\begin{multline}
							\label{eq0302_02}
\|  (\delta_{j,h} p)\eta \|_{L_2(B_{R_1}^+)}^2 \\
\le \varepsilon\| D\psi\|_{L_2(B_{R_1}^+)}^2 + N(\varepsilon, \delta) \| D (\delta_{j,h} u)\|_{L_2(B_{R_1}^+)}^2
+ \varepsilon \| (\delta_{j,h} p) \eta  \|_{L_2(B_{R_1}^+)}^2
\\
+ \varepsilon \| (\delta_{j,h} p) \eta(\cdot+h)  \|_{L_2(B_{R_1}^+)}^2
+ N(\varepsilon) \| (p-c)(\cdot+h) (\delta_{j,h} \eta) \|_{L_2(B_{R_1}^+)}^2.
\end{multline}
Here, we note that
$$
\| (\delta_{j,h} p) \eta(\cdot+h)  \|_{L_2(B_{R_1}^+)}
\le \| (\delta_{j,h} p) \eta \|_{L_2(B_{R_1}^+)} + \| (\delta_{j,h} p) \left(\eta(\cdot+h) - \eta(\cdot)\right) \|_{L_2(B_{R_1}^+)},
$$
where the last term is estimated by
$$
\| (\delta_{j,h} p) \left(\eta(\cdot+h) - \eta(\cdot)\right) \|_{L_2(B_{R_1}^+)}^2 =
\int_{B_{R_1}^+} \left| \left( p(x+h) - p(x) \right) (\delta_{j,h} \eta)(x) \right|^2 \, dx
$$
$$
\le N(R-r)^{-2}\| p - c \|_{L_2(B_R^+)}^2.
$$
In addition, note that
$$
\| (p-c)(\cdot+h) (\delta_{j,h} \eta) \|_{L_2(B_{R_1}^+)}^2
\le N(R-r)^{-2} \|p-c\|_{L_2(B_R^+)}^2,
$$
and by Lemma \ref{lem0301_1} and the properties of $\delta_{j,h}$, it holds that
$$
\| D (\delta_{j,h} u)\|_{L_2(B_{R_1}^+)} \le \|D D_{x'} u\|_{L_2(B_{R'}^+)}
\le N (R-r)^{-2} \|Du\|_{L_2(B_R^+)},
$$
where $R_1 < R' < R$.
By using the above inequalities combined with \eqref{eq0302_02}, \eqref{eq0302_01}, and Lemma \ref{lem0225_2}, and choosing a sufficiently small $\varepsilon >0$, we obtain that
$$
\| (\delta_{j,h} p) \eta \|_{L_2(B_{R_1}^+)} \le N(d,\delta) (R-r)^{-2} \|Du\|_{L_2(B_R^+)}.
$$
Together with the properties of the finite difference operator, this proves the desired inequality.
\end{proof}

As usual, by $[u]_{C^\tau(\Omega)}$, $\tau \in (0,1)$, we denote the H\"{o}lder semi-norm of $u$ defined by
$$
[u]_{C^\tau(\Omega)} = \sup_{\substack{x,y \in \Omega \\ x \ne y}} \frac{|u(x) - u(y)|}{|x-y|^\tau}.
$$

The following interior and boundary $L_\infty$ and H\"older estimates constitute the main results of this section.

\begin{lemma}
							\label{lem0229_1}
Let $\ell$ be a constant, and let $(u,p) \in W_2^1(B_2)^d \times L_2(B_2)$ satisfy \eqref{eq0229_01} with $R=2$.
Then, we have that
\begin{equation*}
\|D_{x'}u\|_{L_\infty(B_1)} + \left[D_{x'}u\right]_{C^{1/2}(B_1)} + \left[U\right]_{C^{1/2}(B_1)}
\le N\|Du\|_{L_2(B_2)},
\end{equation*}
and
$$
\|U_i\|_{L_\infty(B_1)} \le N\|Du\|_{L_2(B_2)},
\quad
i = 2, \ldots, d,
$$
where $N=N(d,\delta)$.
\end{lemma}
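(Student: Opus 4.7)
The plan exploits the $x_1$-only dependence of the coefficients to obtain arbitrarily many tangential derivatives of $u$, $p$, and $U$ in $L_2$ on every strictly smaller ball, and then passes to $L_\infty$ and $C^{1/2}$ via an anisotropic Sobolev embedding. The key point is that $D_1$ of the coefficients is unavailable, so the only $x_1$-derivative one can place on $U$ is the single one encoded in \eqref{eq0127_01}; this borderline behavior is precisely what forces the H\"older exponent to be exactly $1/2$, via the one-dimensional embedding $H^1 \hookrightarrow C^{1/2}$.

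First, because $A^{\alpha\beta}$ depends only on $x_1$, the tangential difference-quotient argument from the proofs of Lemmas \ref{lem0301_1} and \ref{lem3.6} shows that for each $j \in \{2,\ldots,d\}$ the pair $(D_j u, D_j p)$ solves the same homogeneous Stokes system on smaller sub-balls with $\operatorname{div}(D_j u) = D_j \ell = 0$. Iterating Lemmas \ref{lem0301_1} and \ref{lem3.6} on a nested sequence of radii $r_k \nearrow 2$, I obtain, for every integer $k \ge 0$ and every $r < 2$,
\begin{equation*}
\|D D_{x'}^k u\|_{L_2(B_r)} + \|D_{x'}^{k+1} p\|_{L_2(B_r)} \le N_k \|Du\|_{L_2(B_2)}.
\end{equation*}
To transfer these bounds to $U$, purely tangential derivatives $D_{x'}^k U$ come directly from the definition \eqref{eq0229_10} (with Lemma \ref{lem0225_2} absorbing the additive-constant ambiguity coming from $p$ when $k = 0$), while the structural identity \eqref{eq0127_01} expresses the single $x_1$-derivative $D_1 U$ in terms of $D_{\alpha\beta} u$ with $\alpha \ne 1$ (i.e., $D D_{x'} u$) and $D_{x'} p$. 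Applying $D_{x'}^k$ to \eqref{eq0127_01} and invoking the display above yields
\begin{equation*}
\|D_1 D_{x'}^k U\|_{L_2(B_r)} \le N_k \|Du\|_{L_2(B_2)}.
\end{equation*}

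Finally, let $v$ denote any of $D_{x'} u$, $U_i$ for $i \ge 2$, or $U_1 - (U_1)_{B_1}$. By the previous step, $D^\alpha v \in L_2(B_r)$ for every $r < 2$ and every multi-index $\alpha$ with $\alpha_1 \le 1$ and $|\alpha'|$ arbitrarily large. Fix $m > (d-1)/2$. Sobolev embedding in the tangential slices gives $v(x_1, \cdot)$ uniformly bounded and $C^1$ in $x'$; coupled with $D_1 v$ and its tangential derivatives, this lifts $v(\cdot, x')$ to $H^1$ uniformly in $x'$, and the one-dimensional embedding $H^1 \hookrightarrow C^{1/2}$ then produces $C^{1/2}$-regularity in $x_1$ uniformly in $x'$. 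Combining the two slice estimates yields $v \in C^{1/2}(B_1)$ with seminorm bounded by $\|Du\|_{L_2(B_2)}$. For the $L_\infty$ bounds on $D_{x'} u$ and $U_i$ with $i \ge 2$, note that neither involves $p$, so $\|v\|_{L_2(B_1)} \le N \|Du\|_{L_2(B_2)}$ directly, controlling the $B_1$-average of $v$; combining with the $C^{1/2}$ oscillation estimate produces $\|v\|_{L_\infty(B_1)} \le N \|Du\|_{L_2(B_2)}$. The main obstacle throughout is arranging every step so that $\|p\|_{L_2}$ never appears on the right-hand side, which is precisely what the combination $U$ together with Lemma \ref{lem0225_2} accomplishes.
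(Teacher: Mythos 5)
Your proposal is correct and follows essentially the same route as the paper: iterate Lemmas \ref{lem0301_1} and \ref{lem3.6} to bound $D D_{x'}^k u$ and $D_{x'}^k p$ in $L_2$ on smaller balls, use the identity \eqref{eq0127_01} for the single $x_1$-derivative of $U$ and Lemma \ref{lem0225_2} to remove the pressure mean, then conclude by the anisotropic Sobolev embedding with $k>(d-1)/2$. The only difference is cosmetic: you spell out the slice-wise proof of that embedding (which is fine), whereas the paper cites it, and you normalize $U_1$ by subtracting $(U_1)_{B_1}$ instead of working with $\tilde U$ as in the paper — an equivalent device.
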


\begin{proof}
See the proof of Lemma \ref{lem0229_2} below, with obvious modifications.
\end{proof}

\begin{lemma}
							\label{lem0229_2}
Let $\ell$ be a constant, and let $(u,p) \in W_2^1(B_2^+)^d \times L_2(B_2^+)$ satisfy \eqref{eq0229_02} with $R=2$.
Then, we have that
\begin{equation*}
\|D_{x'}u\|_{L_\infty(B_1^+)} + \left[D_{x'}u \right]_{C^{1/2}(B_1^+)} + \left[U\right]_{C^{1/2}(B_1^+)}
\le N \|Du\|_{L_2(B_2^+)},
\end{equation*}
and
$$
\|U_i\|_{L_\infty(B_1^+)} \le N\|Du\|_{L_2(B_2^+)},
\quad
i = 2, \ldots, d,
$$
where $N=N(d,\delta)$.
\end{lemma}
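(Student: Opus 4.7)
The approach is to exploit that the coefficients depend only on $x_1$ in order to bootstrap arbitrarily many tangential ($x'$-) derivatives of $(u,p)$ in $L_2$, and then to combine these with the structural identity \eqref{eq0127_01} (which encodes $D_1 U$ without differentiating the coefficients) to deduce the claimed pointwise bounds.

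\textbf{Step 1: Tangential bootstrap.} Since $A^{\alpha\beta} = A^{\alpha\beta}(x_1)$ and $\operatorname{div} u = \ell$ is constant, for any multi-index $\gamma$ supported in $\{2,\dots,d\}$ the pair $(D_{x'}^\gamma u, D_{x'}^\gamma p)$ satisfies the same homogeneous Stokes system on $B_R^+$, with vanishing boundary data on $B_R \cap \partial\bR^d_+$ and divergence zero for $|\gamma|\ge 1$. Applying Lemmas \ref{lem0301_1} and \ref{lem3.6} repeatedly to $(D_{x'}^\gamma u, D_{x'}^\gamma p)$ on a nested sequence of shrinking half-balls, I obtain, for every integer $k\ge 0$ and every $r\in [1, 2)$,
\[
\|D D_{x'}^k u\|_{L_2(B_r^+)} + \|D_{x'}^k p\|_{L_2(B_r^+)} \le N(k,r,d,\delta)\, \|Du\|_{L_2(B_2^+)}.
\]
Since \eqref{eq0127_01} expresses $D_1 U$ as a linear combination of $D_{x'} D_\beta u$ and $D_{x'} p$, the same right-hand side controls $\|D_1 D_{x'}^k U\|_{L_2(B_r^+)}$.

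\textbf{Step 2: Uniform $x_1$-slice bounds and $x'$-Sobolev embedding.} For any scalar function $f$ on $B_r^+$ with $\|D_{x'}^k f\|_{L_2(B_r^+)}$ and $\|D_1 D_{x'}^k f\|_{L_2(B_r^+)}$ finite, the one-dimensional fundamental theorem of calculus applied to $x_1 \mapsto \|D_{x'}^k f(x_1,\cdot)\|_{L_2(B_r')}^2$, together with Cauchy--Schwarz, yields a uniform bound on $\sup_{x_1}\|D_{x'}^k f(x_1,\cdot)\|_{L_2(B_r')}$ in terms of these two $L_2(B_r^+)$-norms. Taking $f$ to be any component of $D_{x'}^j u$ (with $j\ge 1$) or of $U_i$ ($i\ge 2$), and combining with the $(d-1)$-dimensional Sobolev embedding $W^k_2(B_r')\hookrightarrow C^{1/2}(B_{3/2}')$ for $k > (d-1)/2 + 1/2$, I obtain uniform-in-$x_1$ bounds on $\|D_{x'} u(x_1,\cdot)\|_{C^{1/2}(B_{3/2}')}$ and $\|U_i(x_1,\cdot)\|_{C^{1/2}(B_{3/2}')}$ ($i\ge 2$), together with the $L_\infty$ bound for $U_i$. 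The $x'$-Hölder seminorm of $U_1$ is obtained by the same argument applied to $D_{x'} U$ in place of $U$, which sidesteps the absence of a pointwise bound on $p$.

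\textbf{Step 3: Hölder continuity in $x_1$.} This is the main obstacle, since neither $D_1 D_{x'} u$ nor $D_1 p$ is pointwise bounded (the coefficients are not differentiable in $x_1$), so $C^{1/2}$-continuity in $x_1$ cannot be extracted from a Lipschitz estimate. The remedy is an anisotropic fundamental theorem of calculus combined with Cauchy--Schwarz:
\[
|D_{x'} u(x_1,x') - D_{x'} u(y_1,x')| \le |x_1-y_1|^{1/2}\,\Bigl(\int_{y_1}^{x_1}\|D_1 D_{x'} u(t,\cdot)\|_{L_\infty(B_{3/2}')}^2\,dt\Bigr)^{1/2}.
\]
Sobolev in $x'$ bounds $\|D_1 D_{x'} u(t,\cdot)\|_{L_\infty(B_{3/2}')}$ by $\|D_1 D_{x'}^k u(t,\cdot)\|_{L_2(B_{3/2}')}$ for $k$ large, and the $t$-integral is then dominated by $\|D_1 D_{x'}^k u\|_{L_2(B_{3/2}^+)}^2 \le N\|Du\|_{L_2(B_2^+)}^2$ from Step 1. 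Exactly the same scheme, with $U$ in place of $D_{x'} u$ and $D_1 U$ (via \eqref{eq0127_01}) in place of $D_1 D_{x'} u$, yields the $x_1$-Hölder estimate for $U$. A triangle-inequality splitting then combines the $x'$- and $x_1$-Hölder bounds to produce $[D_{x'} u]_{C^{1/2}(B_1^+)}$ and $[U]_{C^{1/2}(B_1^+)}$ bounded by $N\|Du\|_{L_2(B_2^+)}$, completing the proof.
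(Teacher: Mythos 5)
Your proposal is correct and takes essentially the same route as the paper: a tangential bootstrap via Lemmas \ref{lem0301_1} and \ref{lem3.6} giving $L_2$ control of $DD_{x'}^k u$, $D_{x'}^k p$ and, through \eqref{eq0127_01}, of $D_1 D_{x'}^k U$, followed by an anisotropic Sobolev embedding (one derivative in $x_1$, many in $x'$) to get the $L_\infty$ and $C^{1/2}$ bounds. The only cosmetic differences are that you carry out that embedding by hand via slicewise fundamental theorem of calculus and Cauchy--Schwarz (with some harmless bookkeeping of radii to keep the cylinders inside $B_r^+$, $r<2$), and that you handle the unbounded pressure in $U_1$ through increments of $U$ rather than the paper's device of passing to $\tilde U$ with $p-(p)_{B_1^+}$ and invoking Lemma \ref{lem0225_2}.
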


\begin{proof}
From Lemmas \ref{lem0301_1} and \ref{lem3.6}, we have that $(D_{x'} u, D_{x'}p) \in W_2^1(B_{r_1}^+)^d \times L_2(B_{r_1}^+)$ and
\begin{equation*}
\|D D_{x'} u \|_{L_2(B_{r_1}^+)}^2 + \| D_{x'}p \|_{L_2(B_{r_1}^+)}^2 \le N(d,\delta, r_1) \int_{B_2^+} |Du|^2 \, dx,
\end{equation*}
where $1 < r_1 < 2$.
Moreover, $(D_{x'} u, D_{x'} p)$ satisfies
$$
\left\{
\begin{aligned}
\cL_0 (D_{x'} u) + \nabla (D_{x'} p) &= 0
\quad
\text{in}\,\,B_{r_1}^+,
\\
\operatorname{div} (D_{x'}u) &= 0
\quad
\text{in}\,\,B_{r_1}^+,
\\
D_{x'} u &= 0
\quad
\text{on}\,\, B_{r_1} \cap \partial \bR^d_+.
\end{aligned}
\right.
$$
Then, we apply Lemmas \ref{lem0301_1} and \ref{lem3.6} again as above, with $r_2$ in place of $r_1$ and with $r_1$ in place of $1$, where $1 < r_2 < r_1 < 2$.
By repeating this process, we see that
$(D_{x'}^k u, D_{x'}^k p)$ belongs to $W_2^1(B_r^+)^d \times L_2(B_r^+)$ with $D_{x'}^k u = 0$ on $B_r \cap \partial \bR^d_+$,
and satisfies
\begin{equation}
							\label{eq0229_06}
\int_{B_r^+}|DD^k_{x'}u|^2 \, dx + \int_{B_r^+}|D^k_{x'} p|^2 \, dx \le N(d,\delta,r,k) \int_{B_2^+} |D u|^2 \, dx
\end{equation}
for any $r \in [1,2)$ and $k=1,2,\ldots$.
In particular, this estimate means that $D_{x'}u$ has one derivative in $x_1$ and sufficiently many derivatives in $x_i$, $i=2,\cdots,d$, the $L_2(B_1^+)$ norms of which are bounded by $\|Du\|_{L_2(B_2^+)}$. Then, by the anisotropic Sobolev embedding theorem with $k> (d-1)/2$ (see, for instance, the proof \cite[Lemma 3.5]{MR2800569}), we get that
$$
\|D_{x'}u\|_{L_\infty(B_1^+)} + [D_{x'}u]_{C^{1/2}(B_1^+)} \le N(d,\delta) \|Du\|_{L_2(B_2^+)}.
$$

Now, we prove the H\"{o}lder semi-norm estimate of $U$ and the sup-norm estimate of $U_i$, $i = 2, \ldots, d$.
Set
$$
\tilde{U} = A^{1\beta}D_\beta u + \left( p - (p)_{B_1^+}, 0, \ldots, 0 \right)^{\operatorname{tr}}.
$$
By the definitions of $U$ and $\tilde{U}$, we have that
$$
\|D_{x'}^k \tilde{U}\|_{L_2(B_1^+)} = \|D_{x'}^k U\|_{L_2(B_1^+)} \le N(d,\delta)\| D D_{x'}^k u\|_{L_2(B_1^+)} + \|D_{x'}^k p\|_{L_2(B_1^+)},
$$
where $k=1,2,\ldots$.
In combination with \eqref{eq0229_06}, this shows that
\begin{equation}
							\label{eq0229_08}
\|D_{x'}^k \tilde{U}\|_{L_2(B_1^+)} \le N(d,\delta,k) \|Du\|_{L_2(B_2^+)}.
\end{equation}
Similarly, since the $D_{\alpha\beta}u$ terms on the right-hand side of \eqref{eq0127_01} are of the form $D D_{x'} u$, we have that
\begin{equation}
							\label{eq0229_09}
\|D_1 D_{x'}^k \tilde{U}\|_{L_2(B_1^+)} = \|D_1 D_{x'}^k U\|_{L_2(B_1^+)}
\le N(d,\delta,k)\|Du\|_{L_2(B_2^+)},
\end{equation}
where $k=0,1,2,\ldots$.
To estimate $\|\tilde{U}\|_{L_2(B_1^+)}$, we apply Lemma \ref{lem0225_2} with $R=1$ to obtain that
$$
\|p-(p)_{B_1^+}\|_{L_2(B_1^+)} \le N \|Du\|_{L_2(B_1^+)} \le N \|Du\|_{L_2(B_2^+)},
$$
where $N=N(d,\delta)$.
Together with the definition of $\tilde{U}$, this shows that
$$
\|\tilde{U}\|_{L_2(B_1^+)} \le N(d,\delta) \|Du\|_{L_2(B_2^+)}.
$$
Together with \eqref{eq0229_08} and \eqref{eq0229_09}, and using the anisotropic Sobolev embedding as above with $k>(d-1)/2$, this gives that
$$
\|\tilde{U}\|_{L_\infty(B_1^+)}+ \big[\tilde{U}\big]_{C^{1/2}(B_1^+)} \le N(d,\delta)\|Du\|_{L_2(B_2^+)}.
$$
Since $\left[U\right]_{C^{1/2}(B_1^+)} = \big[\tilde{U}\big]_{C^{1/2}(B_1^+)}$ and $\tilde{U}_i = U_i$, $i = 2, \ldots, d$, we have obtained the desired inequalities. Thus, the lemma is proved.
\end{proof}

\section{Mean oscillation estimates}
                                    \label{sec_05}
In this section, we prove our mean oscillation estimates using the H\"{o}lder estimates developed in Section \ref{sec_04} and the $L_2$-estimates of the Stokes system given in Lemma \ref{lem0225_1}.
Throughout this section, we consider the operator $\cL_0$, i.e., the coefficients $A^{\alpha\beta}$ are measurable functions of $x_1$ only.

\begin{lemma}
							\label{lem0311_1}
Let $r \in (0,\infty)$, $\kappa \ge 2$, $x_0 \in \bR^d$, and $f_\alpha, g \in L_2(B_{\kappa r}(x_0))$.
If $(u,p) \in W_2^1(B_{\kappa r}(x_0))^d \times L_2(B_{\kappa r}(x_0))$ satisfies
$$
\begin{cases}
\cL_0 u + \nabla p = D_\alpha f_\alpha
\quad
&\text{in}\,\,B_{\kappa r}(x_0),
\\
\operatorname{div} u = g
\quad
&\text{in}\,\,B_{\kappa r}(x_0),
\end{cases}
$$
then
$$
\left( |D_{x'}u - (D_{x'}u)_{B_r(x_0)}| \right)_{B_r(x_0)} + \left( |U - (U)_{B_r(x_0)}| \right)_{B_r(x_0)}
$$
$$
\le N \kappa^{-1/2} \left( |Du|^2 \right)^{1/2}_{B_{\kappa r}(x_0)}
+ N \kappa^{d/2} \left(|f_\alpha|^2 + |g|^2  \right)^{1/2}_{B_{\kappa r}(x_0)},
$$
where $N=N(d,\delta)$.
\end{lemma}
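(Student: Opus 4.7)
The plan is the standard decomposition argument. Set $R := \kappa r$ and split $(u,p) = (v,q) + (w,\pi)$, where $(v,q)$ absorbs the inhomogeneous data and $(w,\pi)$ then solves a homogeneous system amenable to Lemma \ref{lem0229_1}. Specifically, since $g - (g)_{B_R(x_0)}$ has zero average on $B_R(x_0)$, Lemma \ref{lem0225_1} produces $(v,q) \in \mathring{W}_2^1(B_R(x_0))^d \times L_2(B_R(x_0))$ with $(q)_{B_R(x_0)} = 0$ solving $\cL_0 v + \nabla q = D_\alpha f_\alpha$ and $\operatorname{div} v = g - (g)_{B_R(x_0)}$ with zero boundary value, together with the $L_2$ bound
$$
\|Dv\|_{L_2(B_R(x_0))} + \|q\|_{L_2(B_R(x_0))} \le N \bigl( \|f_\alpha\|_{L_2(B_R(x_0))} + \|g\|_{L_2(B_R(x_0))} \bigr).
$$
Setting $(w,\pi) := (u - v, p - q)$, we have $\cL_0 w + \nabla \pi = 0$ and $\operatorname{div} w = (g)_{B_R(x_0)}$, a constant, in $B_R(x_0)$, which matches the hypotheses of Lemma \ref{lem0229_1}.

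For the $(v,q)$-contribution I would use the crude bound
$(|D_{x'}v - (D_{x'}v)_{B_r(x_0)}|)_{B_r(x_0)} \le 2(|Dv|^2)^{1/2}_{B_r(x_0)}$ and enlarge the ball of integration from $B_r$ to $B_R$ at the cost of a volume factor $\kappa^{d/2}$, then insert the $L_2$ estimate above to produce the term $N\kappa^{d/2}(|f_\alpha|^2 + |g|^2)^{1/2}_{B_{\kappa r}(x_0)}$. The analogous argument for the piece of $U$ built from $(v,q)$, namely $A^{1\beta}D_\beta v + (q,0,\ldots,0)^{\operatorname{tr}}$, goes through identically; here the normalization $(q)_{B_R(x_0)} = 0$ is essential, since the estimate requires $q$ itself, and not merely its oscillation, to be bounded in $L_2(B_R(x_0))$.

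For the $(w,\pi)$-contribution I rescale: set $\tilde w(y) := w(x_0 + (R/2)y)$ and $\tilde \pi(y) := (R/2)\,\pi(x_0 + (R/2)y)$, so that $(\tilde w, \tilde \pi)$ solves the Stokes system with constant divergence on $B_2$ with coefficients still depending only on $y_1$. Lemma \ref{lem0229_1} then gives simultaneously
$[D_{y'}\tilde w]_{C^{1/2}(B_1)} + [\tilde U]_{C^{1/2}(B_1)} \le N \|D \tilde w\|_{L_2(B_2)}$.
Because $\kappa \ge 2$, one has $B_r(x_0) \subset B_{R/2}(x_0)$, so the oscillations of $D_{x'}w$ and the $U$-piece built from $(w,\pi)$ over $B_r(x_0)$ are each controlled by $(2r)^{1/2}$ times the corresponding H\"older semi-norm. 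Unwinding the scaling produces a gain of $(r/(R/2))^{1/2} = N\kappa^{-1/2}$, and then the triangle inequality $\|Dw\|_{L_2(B_R(x_0))} \le \|Du\|_{L_2(B_R(x_0))} + \|Dv\|_{L_2(B_R(x_0))}$ combined with the $L_2$ bound on $Dv$ delivers the term $N\kappa^{-1/2}(|Du|^2)^{1/2}_{B_{\kappa r}(x_0)}$, together with a further $\kappa^{-1/2}(|f_\alpha|^2 + |g|^2)^{1/2}$ contribution that is absorbed into the $\kappa^{d/2}$ term.

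The only real bookkeeping subtlety is tracking the scale factors between Lemma \ref{lem0229_1} (stated on $B_2$) and $B_R(x_0)$; once this is carried out, the remainder of the argument is triangle inequality and the volume comparison $|B_R|/|B_r| = \kappa^d$. No new analytic ingredient beyond Lemmas \ref{lem0225_1} and \ref{lem0229_1} is required.
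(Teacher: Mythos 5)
Your argument is correct and is essentially the paper's own proof: the paper disposes of this lemma by citing exactly the decomposition you use, namely solving for the inhomogeneous part via Lemma \ref{lem0225_1} (with divergence $g-(g)_{B_{\kappa r}(x_0)}$) and applying the $C^{1/2}$ estimates of Lemma \ref{lem0229_1} to the remaining homogeneous part with constant divergence, the factor $\kappa^{-1/2}$ coming from the H\"older semi-norm on the small ball and $\kappa^{d/2}$ from the volume comparison (see Case 2 in the proof of Lemma \ref{lem0302_1}). Your bookkeeping of the scaling of $w$, $\pi$, and $U$ is consistent, so no further changes are needed.
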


\begin{proof}
This lemma follows as a consequence of Lemmas \ref{lem0225_1} and \ref{lem0229_1}.
See Case 2 in the proof of Lemma \ref{lem0302_1} below.
\end{proof}

\begin{lemma}
							\label{lem0302_1}
Let $r \in (0,\infty)$, $\kappa \ge 16$, $x_0 \in \overline{\bR^d_+}$, and $ f_\alpha, g \in L_2(B^+_{\kappa r}(x_0))$.
If $(u,p) \in W_2^1(B_{\kappa r}^+(x_0))^d \times L_2(B_{\kappa r}^+(x_0))$ satisfies
$$
\begin{cases}
\cL_0 u + \nabla p = D_\alpha f_\alpha
\quad
&\text{in}\,\,B_{\kappa r}^+(x_0),
\\
\operatorname{div} u = g
\quad
&\text{in}\,\,B_{\kappa r}^+(x_0),
\\
u = 0
\quad
&\text{on} \,\, B_{\kappa r} \cap \partial \bR^d_+,
\end{cases}
$$
then
\begin{multline}
							\label{eq0311_01}
\left( |D_{x'}u - (D_{x'}u)_{B_r^+(x_0)}| \right)_{B_r^+(x_0)} + \left( |U - (U)_{B_r^+(x_0)}| \right)_{B_r^+(x_0)}
\\
\le N \kappa^{-1/2} \left( |Du|^2 \right)^{1/2}_{B_{\kappa r}^+(x_0)}
+ N \kappa^{d/2}
\left( |f_\alpha|^2 + |g|^2  \right)^{1/2}_{B_{\kappa r}^+(x_0)},
\end{multline}
where $N=N(d,\delta)$.
\end{lemma}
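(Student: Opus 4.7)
My plan is, on a suitable slightly smaller sub-half-ball, to decompose $(u,p) = (v,\pi) + (w,q)$, where $(w,q)$ solves the inhomogeneous Stokes system with zero Dirichlet data --- handled by the $L_2$-solvability in Lemma \ref{lem0225_1} --- and $(v,\pi)$ solves the homogeneous system with constant divergence --- handled by the H\"older estimate in Lemma \ref{lem0229_2} (or Lemma \ref{lem0229_1} in the interior case). The mean oscillation on $B_r^+(x_0)$ then splits into a $v$-contribution, controlled by $r^{1/2}$ times a $C^{1/2}$-seminorm, and a $w$-contribution, controlled purely by $f_\alpha$ and $g$ via Lemma \ref{lem0225_1}. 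After the usual volume rescaling, the former produces the factor $\kappa^{-1/2}$ in front of $(|Du|^2)^{1/2}_{B_{\kappa r}^+(x_0)}$, and the latter produces $\kappa^{d/2}$ in front of $(|f_\alpha|^2+|g|^2)^{1/2}_{B_{\kappa r}^+(x_0)}$, which is exactly \eqref{eq0311_01}.

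I would split into two cases according to $x_{0,1} = \operatorname{dist}(x_0, \partial\bR^d_+)$. In Case 2 (interior), $x_{0,1} \ge \kappa r/4$, so $B_{\kappa r/4}(x_0) \subset \bR^d_+$, and the decomposition is performed on the interior ball $B_{\kappa r/4}(x_0)$ using the interior H\"older estimate of Lemma \ref{lem0229_1}; this is precisely the argument behind Lemma \ref{lem0311_1}, which is why that lemma is immediate from this case. In Case 1 (boundary), $x_{0,1} < \kappa r/4$, I set $y_0 = (0, x_0') \in \partial\bR^d_+$ and choose $\rho$ of order $\kappa r$ so that (using $\kappa \ge 16$)
$$
B_r^+(x_0) \subset B_{\rho/2}^+(y_0) \subset B_\rho^+(y_0) \subset B_{\kappa r}^+(x_0).
$$
Apply Lemma \ref{lem0225_1} on $B_\rho^+(y_0)$ to obtain $(w,q)$ with $w = 0$ on $\partial B_\rho^+(y_0)$, $(q)_{B_\rho^+(y_0)} = 0$, $\operatorname{div} w = g - (g)_{B_\rho^+(y_0)}$, and
$$
\|Dw\|_{L_2(B_\rho^+(y_0))} + \|q\|_{L_2(B_\rho^+(y_0))} \le N\bigl(\|f_\alpha\|_{L_2(B_\rho^+(y_0))} + \|g\|_{L_2(B_\rho^+(y_0))}\bigr).
$$
Then $(v,\pi) := (u-w, p-q)$ solves the homogeneous system on $B_\rho^+(y_0)$ with constant divergence and $v = 0$ on $B_\rho(y_0) \cap \partial\bR^d_+$; writing $U_v := A^{1\beta}D_\beta v + (\pi,0,\ldots,0)^{\operatorname{tr}}$ and $U_w := A^{1\beta}D_\beta w + (q,0,\ldots,0)^{\operatorname{tr}}$, a rescaling of Lemma \ref{lem0229_2} gives
$$
[D_{x'}v]_{C^{1/2}(B_{\rho/2}^+(y_0))} + [U_v]_{C^{1/2}(B_{\rho/2}^+(y_0))} \le N\rho^{-1/2}\Bigl(\dashint_{B_\rho^+(y_0)} |Dv|^2\,dx\Bigr)^{1/2}.
$$

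To finish, I write $D_{x'}u = D_{x'}v + D_{x'}w$ and $U = U_v + U_w$. The oscillations of $D_{x'}v$ and $U_v$ on $B_r^+(x_0)$ are bounded by $(2r)^{1/2}$ times the above H\"older seminorms, which combined with the triangle inequality and the $L_2$-bound on $w$,
$$
(|Dv|^2)^{1/2}_{B_\rho^+(y_0)} \le N(|Du|^2)^{1/2}_{B_{\kappa r}^+(x_0)} + N(|f_\alpha|^2 + |g|^2)^{1/2}_{B_{\kappa r}^+(x_0)},
$$
produce the term $N\kappa^{-1/2}(|Du|^2)^{1/2}_{B_{\kappa r}^+(x_0)}$ plus a lower-order piece absorbed into the $\kappa^{d/2}$ term. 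The oscillations of $D_{x'}w$ and $U_w$ on $B_r^+(x_0)$ (using $|U_w| \le N(|Dw|+|q|)$) are bounded by twice the corresponding $L_2$-averages on $B_r^+(x_0)$; rescaling the $L_2$-bound from $B_\rho^+(y_0)$ down to $B_r^+(x_0)$ picks up the volume ratio $(\rho/r)^{d/2} \asymp \kappa^{d/2}$ and yields exactly the $N\kappa^{d/2}(|f_\alpha|^2 + |g|^2)^{1/2}_{B_{\kappa r}^+(x_0)}$ term. The only delicate step is bookkeeping these two competing powers of $\kappa$: the $\kappa^{-1/2}$ smallness of the $v$-piece is the entire point of the estimate, since the subsequent sharp/maximal function iteration relies on it, and it must not be spoiled when absorbing cross terms into the dominant $\kappa^{d/2}$ term.
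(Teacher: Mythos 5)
Your proposal is correct and follows essentially the same argument as the paper: a case split on $\operatorname{dist}(x_0,\partial\bR^d_+)$ relative to $\kappa r$, with the interior case reduced to Lemma \ref{lem0311_1} and the boundary case handled by decomposing $(u,p)=(v,\pi)+(w,q)$ on a half ball centered at $(0,x_0')$ via Lemma \ref{lem0225_1}, then applying the (rescaled) H\"older estimate of Lemma \ref{lem0229_2} to the homogeneous part and volume-ratio/triangle-inequality bookkeeping to produce the $\kappa^{-1/2}$ and $\kappa^{d/2}$ factors. The only differences from the paper's proof (threshold $\kappa r/4$ versus $\kappa r/8$, and rescaling Lemma \ref{lem0229_2} directly instead of dilating to $B_2^+$) are cosmetic.
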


\begin{proof}
Denote the first coordinate of $x_0$ by ${x_0}_1$.
We consider the following two cases.

\noindent{\bf Case 1}: ${x_0}_1 \ge \kappa r/8$.
In this case, we have that
$$
B_r^+(x_0) = B_r(x_0) \subset B_{\kappa r/8}(x_0) \subset \bR^d_+
$$
and $\kappa/8\ge 2$.
Then, the estimate \eqref{eq0311_01} follows from Lemma \ref{lem0311_1}.

\noindent{\bf Case 2}: ${x_0}_1 < \kappa r/8$.
Set $y_0 = (0,x_0')$. Then, we have that
\begin{equation}
                                \label{eq4.27}
B_r^+(x_0) \subset B_{\kappa r/4}^+(y_0)
\subset B_{\kappa r/2}^+(y_0) \subset B_{\kappa r}^+(x_0).
\end{equation}
Considering dilation, it suffices to prove \eqref{eq0311_01} when $r = 4/\kappa \le 1/4$ and ${x_0}_1 < 1/2$.
Furthermore, we assume that $y_0 = 0$.
By Lemma \ref{lem0225_1}, there exists $(w,p_1) \in \mathring{W}_2^1(B_2^+)^d \times L_2(B_2^+)$ such that $(p_1)_{B_2^+} = 0$,
$$
\begin{cases}
\cL_0 w + \nabla p_1 =D_\alpha f_\alpha
\quad
&\text{in}\,\,B_2^+,
\\
\operatorname{div} w = g - (g)_{B_2^+}
\quad
&\text{in}\,\,B_2^+,
\end{cases}
$$
and
\begin{equation}
							\label{eq0225_02}
\|Dw\|_{L_2(B_2^+)} + \|p_1\|_{L_2(B_2^+)} \le N \left( \|f_\alpha\|_{L_2(B_2^+)} + \|g\|_{L_2(B_2^+)}\right),
\end{equation}
where $N=N(d,\delta)$.
In particular, we have that $w = 0$ on $B_2 \cap \partial \bR^d_+$.
The estimate \eqref{eq0225_02} clearly implies that
\begin{equation}
							\label{eq0302_04}
\|Dw\|_{L_2(B_r^+(x_0))} + \|p_1\|_{L_2(B_r^+(x_0))} \le N \left( \|f_\alpha\|_{L_2(B_2^+)} + \|g\|_{L_2(B_2^+)}\right),
\end{equation}
where $N=N(d,\delta)$.

Now, we set $(v,p_2) = (u,p) - (w,p_1)$, which satisfies
$$
\begin{cases}
\cL_0 v + \nabla p_2 = 0
\quad
&\text{in}\,\,B_2^+,
\\
\operatorname{div} v = (g)_{B_2^+}
\quad
&\text{in}\,\,B_2^+,
\\
v = 0
\quad
&\text{on} \,\, B_2 \cap \partial \bR^d_+.
\end{cases}
$$
Then, by Lemma \ref{lem0229_2},
\begin{align*}
&\left( |D_{x'} v - (D_{x'} v)_{B_r^+(x_0)}| \right)_{B_r^+(x_0)}
\le (2r)^{1/2} \left[D_{x'}v\right]_{C^{1/2}(B_r^+(x_0))}\\
&\,\,\le (2r)^{1/2} \left[D_{x'}v\right]_{C^{1/2}(B_1^+)}
\le N \kappa^{-1/2} \|Dv\|_{L_2(B_2^+)},
\end{align*}
where $N=N(d,\delta)$.
Similarly, we have that
$$
\left( |V - (V)_{B_r^+(x_0)}| \right)_{B_r^+(x_0)}
\le N \kappa^{-1/2} \|Dv\|_{L_2(B_2^+)},
$$
where $V$ is defined in exactly the same way as $U$ in \eqref{eq0229_10} with $v$ in place of $u$.
Then, it follows from the triangle inequality that
\begin{align*}
&\left( |D_{x'} u - (D_{x'} u)_{B_r^+(x_0)}| \right)_{B_r^+(x_0)}\\
&\le \left( |D_{x'} v - (D_{x'} v)_{B_r^+(x_0)}| \right)_{B_r^+(x_0)} + 2 \left( |D_{x'} w| \right)_{B_r^+(x_0)}\\
&\le N \kappa^{-1/2} \left( |Dv|^2 \right)_{B_{2}^+}^{1/2} + N\kappa^{d/2} \|D_{x'}w\|_{L_2(B_r^+)},
\end{align*}
where $N=N(d,\delta)$.
Together with the estimates \eqref{eq0225_02} and \eqref{eq0302_04}, and the fact that $u = v+w$, this shows that
\begin{align*}
&\left( |D_{x'} u - (D_{x'} u)_{B_r^+(x_0)}| \right)_{B_r^+(x_0)}\\
& \le N \kappa^{-1/2} \left( |Du|^2 \right)_{B_{2}^+}^{1/2} + N \kappa^{d/2} \left(|f_\alpha|^2 + |g|^2 \right)_{B_{2}^+}^{1/2}.
\end{align*}
It only remains to observe that the right-hand side is bounded by that of \eqref{eq0311_01}, because of \eqref{eq4.27}.

We can similarly obtain the desired estimate for $U$. Thus, the lemma is proved.
\end{proof}

\section{Proof of Theorem \ref{thm0307_1}}
                                    \label{sec_06}
In this section, we complete the proof of Theorem \ref{thm0307_1}.
We use the following filtration of partitions of $\bR^d$:
$$
\bC_n := \{ C_n = C_n(i_1, \ldots, i_d): (i_1, \ldots, i_d) \in \bZ^d \},
$$
where $n \in \bZ$ and
$$
C_n(i_1, \ldots, i_d) = [i_1 2^{-n}, (i_1 + 1) 2^{-n}) \times \cdots \times [i_d 2^{-n}, (i_d + 1) 2^{-n}).
$$
For a filtration of partitions of $\bR^d_+$, we replace $i_1 \in \bZ$ by $i_1 \in \{0,1,2,\ldots,\}$.
Using these filtrations, we define the sharp function of $f \in L_{1, \operatorname{loc}}(\Omega)$, where $\Omega = \bR^d$ or $\Omega = \bR^d_+$, by
$$
f^{\#}(x) = \sup_{n < \infty} \dashint_{C_n \ni x} \left| f(y) - (f)_{C_n} \right| \, dy,
$$
where the supremum is taken with respect to all $C_n \in \bC_n$ containing $x$, where $n \in \bZ$.
The maximal function of $f$ in $\bR^d$ or $\bR^+$ is defined by
\begin{equation}
							\label{eq0328_01}
\cM f(x) = \sup_{x_0\in \bar\Omega, \Omega \cap B_r(x_0) \ni x} \dashint_{\Omega \cap B_r(x_0)} |f(y)| \, dy,
\end{equation}
where $\Omega = \bR^d$ or $\Omega = \bR^d_+$, and the supremum is taken with respect to all $B_r(x_0)$ containing $x$ with $r > 0$, where $x_0\in \bar\Omega$.

\begin{proof}[Proof of Theorem \ref{thm0307_1}]
Because Theorem \ref{thm0427} covers the case with $q=2$, we assume that $q \in (2,\infty)$.
We prove the case when $\Omega = \bR^d_+$. The other case is simpler.

For $x \in \bR^d_+$ and $C_n \in \bC_n$ such that $x \in C_n$, find $x_0 \in \bR^d_+$ and the smallest $r \in (0,\infty)$ (indeed, $r = 2^{-n-1}\sqrt{d}$) satisfying
$C_n \subset B_r(x_0)$ and
\begin{equation}
							\label{eq0311_03}
\dashint_{C_n} | h(x) - (h)_{C_n} | \, dx \le N(d) \dashint_{B_r(x_0)}| h(x) - (h)_{B_r(x_0)} | \, dx.
\end{equation}
Since $(u,p) \in W_2^1(B_{\kappa r}^+(x_0))^d \times L_2(B_{\kappa r}^+(x_0))$, it follows from Lemma \ref{lem0302_1} that we have the mean oscillation estimate \eqref{eq0311_01} for $\kappa \ge 16$.
Moreover, each term in the right-hand side of \eqref{eq0311_01} is bounded by its maximal function at $x$.
From this and \eqref{eq0311_03}, we have that
\begin{multline*}
\left( |D_{x'}u - (D_{x'}u)_{C_n}| \right)_{C_n} + \left( |U - (U)_{C_n}| \right)_{C_n}
\\
\le N \kappa^{-1/2} \left( \cM (|Du|^2) (x) \right)^{1/2}
+ N \kappa^{d/2}
\left( \cM (|f_\alpha|^2)(x) \right)^{1/2} +N \kappa^{d/2} \left( \cM(|g|^2)(x)\right)^{1/2}
\end{multline*}
for $x \in C_n$ and $\kappa \ge 16$, where $N=N(d,\delta)$.
By taking the supremum of the left-hand side of the above inequality with respect to all $C_n \ni x$, $n \in \bZ$, we obtain that
\begin{align*}
&\left( D_{x'}u \right)^{\#}(x) + U^{\#}(x)
\le N \kappa^{-1/2} \left( \cM (|Du|^2) (x) \right)^{1/2}\\
&\quad + N \kappa^{d/2}
\left( \cM (|f_\alpha|^2)(x) \right)^{1/2} + N \kappa^{d/2} \left( \cM(|g|^2)(x)\right)^{1/2}
\end{align*}
for $x \in \bR^d_+$ and $\kappa \ge 16$.
Then, we employ the Fefferman-Stein theorem on sharp functions (see, for instance, \cite[Theorem 3.2.10]{MR2435520}) and the maximal function theorem (see, for instance, \cite[Theorem 3.3.2]{MR2435520} or Lemma \ref{lem7.2} in this paper, which also holds when $\Omega = \bR^d_+$ with $N=N(d,q)$) on the above pointwise estimate, to obtain that
\begin{equation}
							\label{eq0311_04}
\|D_{x'}u\|_{L_q} + \|U\|_{L_q} \le N \kappa^{-1/2} \|Du\|_{L_q} + N \kappa^{d/2} \left( \|f_\alpha\|_{L_q} + \|g\|_{L_q} \right),
\end{equation}
where $L_q = L_q(\bR^d_+)$ and $N = N(d,\delta,p)$.
Note that on the left-hand side of the above inequality we do not yet have $L_q$-norms of $D_1u_i$, $i=1,2,\ldots,d$, and $p$.
To obtain $L_q$-estimates of such terms,
we first note the relation
$$
D_1 u_1 + \ldots + D_d u_d = g.
$$
Using this and \eqref{eq0311_04}, we have that
\begin{equation}
							\label{eq0311_05}
\|D_1u_1\|_{L_q} + \|D_{x'}u\|_{L_q} + \|U\|_{L_q} \le N \kappa^{-1/2} \|Du\|_{L_q} + N \kappa^{d/2} \left( \|f_\alpha\|_{L_q} + \|g\|_{L_q} \right).
\end{equation}
Then, we use the relation
\begin{equation}
							\label{eq0127_04}
\sum_{j=2}^d A_{ij}^{11}D_1 u_j = U_i - \sum_{j=1}^d \sum_{\beta=2}^d A_{ij}^{1\beta} D_\beta u_j - A_{i1}^{11}D_1 u_1,
\quad i = 2, \ldots, d,
\end{equation}
which follows from the definition of $U_i$, $i=2,\ldots,d$.
By the ellipticity condition on $A^{\alpha\beta}$, it follows that the $(d-1)\times(d-1)$ matrix $[A_{ij}^{11}]_{i,j=2}^d$ is invertible.
Thus, from \eqref{eq0127_04} and \eqref{eq0311_05} we have that
$$
\|Du\|_{L_q} + \|U\|_{L_q} \le N \kappa^{-1/2} \|Du\|_{L_q} + N \kappa^{d/2} \left( \|f_\alpha\|_{L_q} + \|g\|_{L_q} \right).
$$
Upon taking a sufficiently large $\kappa \ge 16$, which depends only on $d$, $\delta$, and $q$, such that $N \kappa^{-1/2} \le 1/2$, we arrive at
\begin{equation}
							\label{eq0311_06}
\|Du\|_{L_q} + \|U\|_{L_q} \le N \left( \|f_\alpha\|_{L_q} + \|g\|_{L_q} \right).
\end{equation}
Finally, from this estimate and the definition of $U_1$, we see that the $L_q$-norm of $p$ is bounded by the right-hand side of \eqref{eq0307_03}.
By this and \eqref{eq0311_06}, we can conclude that the estimate \eqref{eq0307_03} holds, and the theorem is proved.
\end{proof}

\section{Proof of Theorem \ref{thm0307_2}}
                                    \label{sec_07}

This section is devoted to the proof of  Theorem \ref{thm0307_2}.
For any $x_0\in \bR^d$ and $r>0$, denote
$$
\Omega_r(x_0) = \Omega \cap B_r(x_0).
$$
We first derive the following mean oscillation estimate.

\begin{lemma}
                                        \label{lem6.5}
Let $\mu, \nu \in (1,\infty)$ be such that $1/\mu + 1/\nu = 1$ and $\kappa\ge 32$.
Then, under Assumption \ref{assum0307_1} ($\gamma,\rho$) such that $\rho\kappa\le 1/4$, for any $r \in (0, R_1/\kappa]$, $x_0 \in \overline{\Omega}$, and
$$
(u,p) \in W^1_{2\mu}(\Omega_{\kappa r}(x_0))^d \times L_{2}(\Omega_{\kappa r}(x_0))
$$
satisfying
\begin{equation}
                            \label{eq11.33}
\begin{cases}
\cL u + \nabla p = D_\alpha f_\alpha
\quad
&\text{in}\,\,\Omega_{\kappa r}(x_0),
\\
\operatorname{div} u = g
\quad
&\text{in}\,\,\Omega_{\kappa r}(x_0),
\\
u = 0
\quad
&\text{on} \,\, \partial \Omega\cap B_{\kappa r}(x_0),
\end{cases}
\end{equation}
 where $f_\alpha \in L_{2}(\Omega_{\kappa r}(x_0))$,
there exists a $d^2$-dimensional vector-valued function $\cU$ on $\Omega_{\kappa r}(x_0)$ such that on $\Omega_{\kappa r}(x_0)$,
\begin{equation}
                                    \label{eq10.54}
N^{-1} |Du| \le |\cU| \le N |Du|
\end{equation}
and
\begin{align}
&\left( |\cU - (\cU)_{\Omega_r(x_0)}|\right)_{\Omega_r(x_0)}
\le N (\kappa^{-\frac 1 2} + \kappa \rho) \left( |Du|^2 \right)_{\Omega_{\kappa r}(x_0)}^{\frac{1}{2}}\nonumber\\
                                    \label{eq3.33}
&\,\,+ N \kappa^{\frac d 2} \left(f_\alpha^2\right)_{\Omega_{\kappa r}(x_0)}^{\frac{1}{2}}+N \kappa^{\frac d 2} \left(g^2\right)_{\Omega_{\kappa r}(x_0)}^{\frac{1}{2}}
+ N \kappa^{\frac d 2} (\gamma+\rho)^{\frac{1}{2\nu}} \left( |Du|^{2\mu} \right)_{\Omega_{\kappa r}(x_0)}^{\frac{1}{2\mu}},
\end{align}
where $N=N(d,\delta, \mu)$.
\end{lemma}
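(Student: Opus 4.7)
The plan is to reduce to the mean oscillation estimates of Lemmas~\ref{lem0311_1} and~\ref{lem0302_1} via a two-step perturbation: first flatten the boundary (in the boundary case) so that $\partial\Omega$ becomes a hyperplane, then freeze the coefficients in the $x'$-variables. I split into an interior case $\operatorname{dist}(x_0,\partial\Omega)\ge\kappa r$ and a boundary case; the latter is the essential one. Fix the coordinate system at scale $\kappa r$ around $x_0$ provided by Assumption~\ref{assum0307_1}. In the boundary case, use the associated Lipschitz graph $x_1=\phi(x')$ with $|\nabla\phi|\le\rho$ to change variables $y=(x_1-\phi(x'),x')$; since $\rho\kappa\le 1/4$, the image $\Omega_*$ of $\Omega_{\kappa r}(x_0)$ satisfies $B^+_{\kappa r/2}(y_0)\subset\Omega_*\subset B^+_{2\kappa r}(y_0)$. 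The transformed $(\tilde u,\tilde p)$ solves a Stokes system with coefficients $\tilde A^{\alpha\beta}$ differing from $A^{\alpha\beta}$ by $O(\rho)$ and with an $O(\rho)$ perturbation of the divergence equation.

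Freeze the coefficients in $y'$ by setting $\bar A^{\alpha\beta}(y_1):=\dashint_{B'_{\kappa r}(y_0')}\tilde A^{\alpha\beta}(y_1,z')\,dz'$, so that $\int_{B^+_{\kappa r}(y_0)}|\tilde A^{\alpha\beta}-\bar A^{\alpha\beta}|\,dy\le N(\gamma+\rho)|B^+_{\kappa r}|$ by \eqref{eq0307_01}. Invoke Lemma~\ref{lem0225_1} on $B^+_{\kappa r/2}(y_0)$ to produce $(w,p_w)\in\mathring{W}_2^1(B^+_{\kappa r/2}(y_0))^d\times L_2$ with $(p_w)=0$ solving
$$
\bar\cL_0 w+\nabla p_w=D_\alpha\bigl(\tilde f_\alpha+(\tilde A^{\alpha\beta}-\bar A^{\alpha\beta})D_\beta\tilde u\bigr),\qquad \operatorname{div} w=\tilde g-(\tilde g),
$$
and obeying $\|Dw\|_{L_2}+\|p_w\|_{L_2}\le N\bigl(\|(\tilde A-\bar A)D\tilde u\|_{L_2}+\|\tilde f_\alpha\|_{L_2}+\|\tilde g\|_{L_2}\bigr)$. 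The pointwise bound $|\tilde A-\bar A|\le N(\delta)$ combined with the $L_1$-smallness of the mean oscillation and H\"older's inequality with exponents $(\mu,\nu)$ yields
$$
\|(\tilde A-\bar A)D\tilde u\|_{L_2}\le N(\gamma+\rho)^{1/(2\nu)}|B^+_{\kappa r}|^{1/(2\nu)}\|D\tilde u\|_{L_{2\mu}(B^+_{\kappa r}(y_0))},
$$
which is the source of the $(\gamma+\rho)^{1/(2\nu)}(|Du|^{2\mu})^{1/(2\mu)}$ term in \eqref{eq3.33}. The remainder $(v,p_v):=(\tilde u,\tilde p)-(w,p_w)$ solves the homogeneous $\bar\cL_0$-system with constant divergence and $v=0$ on the flat part of the boundary, so Lemma~\ref{lem0302_1} applies at scales $(r,\kappa r/2)$ and controls the oscillation of $D_{y'}v$ and $V:=\bar A^{1\beta}D_\beta v+(p_v,0,\ldots,0)^{\operatorname{tr}}$ by $\kappa^{-1/2}(|Dv|^2)^{1/2}_{B^+_{\kappa r/2}(y_0)}$.

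Define $\cU$ in the rotated $y$-frame as the $d^2$-tuple obtained by concatenating $D_{y'}\tilde u$ ($d(d-1)$ components), $\tilde U_i=\sum_{j,\beta}\tilde A^{1\beta}_{ij}D_\beta\tilde u_j$ for $i=2,\ldots,d$ (the $d-1$ pressure-free components of the vector from \eqref{eq0229_10}), and $D_1\tilde u_1$. Relation \eqref{eq0127_04} together with the invertibility of $[\tilde A^{11}_{ij}]_{i,j\ge 2}$ afforded by the ellipticity condition makes $\cU$ pointwise bilipschitz-equivalent to $D\tilde u$, and pull-back to the original $x$-frame costs at most $(1+N\rho)$, proving \eqref{eq10.54}. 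Writing $\cU=\cV+\cW$ for the corresponding decomposition and applying $(|\cU-(\cU)_{\Omega_r}|)_{\Omega_r}\le(|\cV-(\cV)_{\Omega_r}|)_{\Omega_r}+2(|\cW|)_{\Omega_r}$, together with Jensen's inequality in the form $(|\cW|)_{\Omega_r}\le N\kappa^{d/2}\|Dw\|_{L_2}/|B^+_{\kappa r}|^{1/2}$, combines with the previous estimates to give every term in \eqref{eq3.33}. The $\kappa\rho$ factor multiplying $(|Du|^2)^{1/2}$ arises from the boundary-flattening commutators on the thin strip of width $\le\rho\kappa r$ between $\Omega_*$ and $B^+_{\kappa r/2}(y_0)$, handled by a cutoff combined with Hardy's inequality as in \cite{MR2835999,CL15}. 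The interior case is analogous but uses Lemma~\ref{lem0311_1} on full balls and contributes no $\kappa\rho$ term.

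The main obstacle is the boundary-flattening step: the change of coordinates is not volume-preserving, so it perturbs the divergence equation and produces lower-order commutator terms of size $\rho(\|Du\|_{L_2}+\|p\|_{L_2})$ concentrated near the boundary. Controlling these by the Hardy--cutoff device while ensuring the pressure perturbation is absorbed into the $L_2$ error (rather than the $L_{2\mu}$ one, which would ruin the bootstrap for Theorem~\ref{thm0307_2}) is the technical heart of the argument; once this is in place the remainder is a routine combination of Lemmas~\ref{lem0225_1}, \ref{lem0311_1}, and~\ref{lem0302_1} with H\"older's inequality.
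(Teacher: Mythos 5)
Your overall architecture (perturb off a homogeneous system with coefficients frozen in $x'$, apply the half-space oscillation/H\"older estimates to the homogeneous part, put the inhomogeneous part into an $L_2$ error via Lemma \ref{lem0225_1}, generate the $(\gamma+\rho)^{1/(2\nu)}$ term by H\"older with exponents $(\mu,\nu)$, and treat boundary-layer errors by a cutoff plus Hardy) matches the paper. But the first step of your boundary case contains a genuine gap: you flatten the boundary by $y=(x_1-\phi(x'),x')$ and then claim that the transformed coefficients $\tilde A^{\alpha\beta}$ satisfy $\int_{B^+_{\kappa r}}|\tilde A^{\alpha\beta}-\bar A^{\alpha\beta}(y_1)|\,dy\le N(\gamma+\rho)|B^+_{\kappa r}|$ ``by \eqref{eq0307_01}''. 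This does not follow. Assumption \ref{assum0307_1} controls the oscillation of $A^{\alpha\beta}(x_1,\cdot)$ in the \emph{original} coordinates, while $A^{\alpha\beta}$ is allowed to be an arbitrary bounded measurable function of $x_1$; after composing with $x_1=y_1+\phi(y')$ the first argument is shifted by the $y'$-dependent amount $\phi(y')$, and since $A^{\alpha\beta}$ may oscillate on arbitrarily small $x_1$-scales, the composed coefficient $A^{\alpha\beta}(y_1+\phi(y'),y')$ can have mean oscillation in $y'$ of order one even when $\gamma=0$ (take coefficients depending on $x_1$ only and oscillating rapidly, with any nonconstant $\phi$). So the key term $\|(\tilde A-\bar A)D\tilde u\|_{L_2}\le N(\gamma+\rho)^{1/(2\nu)}\cdots$ is unjustified, and this is precisely the obstruction that makes boundary flattening incompatible with coefficients merely measurable in one direction. (Incidentally, the obstacle you flag at the end is not the real one: the map $y=(x_1-\phi(x'),x')$ has Jacobian determinant $1$, so volume is preserved; the divergence structure is perturbed only because the vector field is not transformed, and that is a minor issue compared with the loss of the coefficient structure.)

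The paper's proof avoids flattening altogether. After only a rotation, the Lipschitz graph lies in the slab $\{|x_1|<\rho R\}$ with $R=\kappa r/2$; one multiplies $u$ by a cutoff $\chi(x_1)$ vanishing for $x_1\le\rho R$ and equal to $1$ for $x_1\ge 2\rho R$, so that $\hat u=\chi u$ solves, in the genuinely flat region $\{x_1>\rho R\}\cap B_R$, a Stokes system with the frozen coefficients $\bar A^{\alpha\beta}(x_1)$ taken in the \emph{original} coordinates, where \eqref{eq17_50} legitimately gives the $\gamma$-smallness. The cutoff errors $D((\chi-1)u)$ and $\chi'u_1$ are supported in the thin strip and are handled by H\"older and Hardy's inequality (using $u=0$ on $\partial\Omega$), producing the $\rho^{1/(2\nu)}$ contributions; the homogeneous remainder $(v,p_2)$ on the translated half-ball $\widetilde B^+_{3R/4}(\rho R,0)$ is treated with Lemma \ref{lem0229_2}, and the $\kappa\rho$ term in \eqref{eq3.33} comes from the set $\cD_1=\Omega_r(x_0)\cap\{x_1\le\rho R\}$, of relative measure $\le N\kappa\rho$, on which $v$ and $V_i$ vanish. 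If you want to keep your flattening route you would need continuity of $A^{\alpha\beta}$ in $x_1$ (or a smallness hypothesis on its full oscillation), which is exactly what the theorem does not assume; as written, the proposal does not prove the lemma.
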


\begin{proof}
We mainly follow the proof of Proposition 7.10 in \cite{MR2835999}, where $\bR^d_+$ instead of $\Omega$ is considered.
Let $\tilde x\in \partial\Omega$ be such that $|x_0 -\tilde x|=\text{dist}(x_0,\partial\Omega)$.
As in the proof of Lemma \ref{lem0302_1}, we consider two cases.

\noindent{\bf Case 1: $|x_0 -\tilde x|\ge \kappa r/16$.} In this case, we have that
$$
\Omega_r(x_0)=B_r(x_0)\subset B_{\kappa r/16}(x_0)\subset \Omega.
$$
Since $\kappa/16\ge 2$, \eqref{eq3.33} follows from Lemma \ref{lem0311_1}, by using a rotation of coordinates and setting
$$
\cU=(D_{x'}u,\operatorname{div} u,U_2,\ldots,U_d),
$$
where for $i = 2, \ldots,d$, $U_i$ are given as in \eqref{eq0328_05} below.
See the proof for Case 2.
As in the proof of Theorem \ref{thm0307_1}, by using the definition of $U$, we see that \eqref{eq10.54} is satisfied.

\noindent{\bf Case 2: $|x_0 -\tilde x|< \kappa r/16$.} Without loss of generality, one may assume that $\tilde x$ is the origin. Note that
\begin{equation*}
\Omega_r(x_0)\subset \Omega_{\kappa r/4} \subset \Omega_{\kappa r/2}
\subset \Omega_{\kappa r}(x_0).
\end{equation*}
Denote $R=\kappa r/2 \, (\le R_1/2)$.
Due to Assumption \ref{assum0307_1}, we can take an orthogonal transformation to obtain that
$$
 \{(x_1, x'):\rho R< x_1\}\cap B_R
 \subset\Omega\cap B_R
 \subset \{(x_1, x'):-\rho R<x_1\}\cap B_R
$$
and
\begin{equation}
                                \label{eq17_50}
\dashint_{B_R} \left| A^{\alpha\beta}(x_1,x') - \bar A^{\alpha\beta}(x_1)\right| \, dx \le \gamma,
\end{equation}
where
\begin{equation}
							\label{eq0715_01}
\bar A^{\alpha\beta}(x_1)=\dashint_{B'_R} A^{\alpha\beta}(x_1,x')\,dx'.
\end{equation}
Take a smooth function $\chi$ on $\bR$ such that
$$
\chi(x_1)\equiv 0\quad\text{for}\,\,x_1\le \rho R,
\quad \chi(x_1)\equiv 1\quad\text{for}\,\,x_1\ge  2\rho R,\quad
\text{and}
\quad
|\chi'| \le N(\rho R)^{-1}.
$$
Denote $\cL_0$ to be the elliptic operator with the coefficients $\bar A^{\alpha\beta}$ from \eqref{eq0715_01}.
Let $\hat u=\chi u$, which vanishes on $B_R\cap \{x_1\le \rho R\}$. From \eqref{eq11.33}, it is easily seen that $(\hat u,p)$ satisfies
\begin{equation}
                                    \label{eq17.23b}
\begin{cases}
\cL_0 \hat u+\nabla p = D_\alpha (\tilde f_\alpha+h_\alpha)
\quad
&\text{in}\,\,B_{R}\cap\{x_1>\rho R\},
\\
\operatorname{div}  \hat u = \chi g+\chi' u_1
\quad
&\text{in}\,\,B_{R}\cap\{x_1>\rho R\},
\\
\hat u = 0
\quad
&\text{on} \,\, B_{R}\cap \{x_1=\rho R\},
\end{cases}
\end{equation}
where
$$
\tilde f_\alpha=f_\alpha+(\bar A^{\alpha\beta}- A^{\alpha\beta})D_\beta  u\quad
\text{and}
\quad
h_\alpha=\bar A^{\alpha\beta} D_\beta((\chi-1) u).
$$
For $\tau \in [0,\infty)$, set
$$
\widetilde{B}_r^+(\tau,0) = B_r(\tau,0) \cap \{x_1 > \tau\},
$$
where $0 \in \bR^{d-1}$.
Since $\rho\in (0,1/16)$, we have that
$$
\Omega_{R/2} \subset \Omega_{3R/4}(\rho R,0)\quad
\text{and}
\quad
\widetilde{B}^+_{3R/4}(\rho R,0) \subset B_{R}\cap\{x_1>\rho R\}.
$$
By Lemma \ref{lem0225_1}, there is a unique solution
$$
(\hat w,p_1)\in W^{1}_2(\widetilde{B}^+_{3R/4}(\rho R,0))^d \times L_2(\widetilde{B}^+_{3R/4}(\rho R,0))
$$
satisfying $(p_1)_{\widetilde{B}^+_{3R/4}(\rho R,0)}=0$ and
\begin{equation}
							\label{eq0714_03}
\begin{cases}
\cL_0 \hat w+\nabla p_1 = D_\alpha (\tilde f_\alpha+h_\alpha)
\quad
&\text{in}\,\,\widetilde{B}^+_{3R/4}(\rho R,0),
\\
\operatorname{div} \hat w = \chi g+\chi' u_1-\big(\chi g+\chi' u_1\big)_{\widetilde{B}^+_{3R/4}(\rho R,0)}
\quad
&\text{in}\,\,\widetilde{B}^+_{3R/4}(\rho R,0),
\\
\hat w = 0
\quad
&\text{on} \,\, \partial \widetilde{B}^+_{3R/4}(\rho R,0).
\end{cases}
\end{equation}
Moreover, it holds that
\begin{align}
                                \label{eq3.01}
&\|D\hat w\|_{L_2} \le N\big(\|\tilde f_\alpha\|_{L_2}
+\|h_\alpha\|_{L_2}
+\|\chi g+\chi' u\|_{L_2}\big)\nonumber\\
&\le N\big(\|f_\alpha\|_{L_2}
+\|(\bar A^{\alpha\beta}- A^{\alpha\beta})D_\beta  u\|_{L_2}\nonumber\\
&\quad+\|D((\chi-1)u)\|_{L_2}
+\|g\|_{L_2}+\|\chi' u\|_{L_2}\big),
\end{align}
where $\|\cdot\|_{L_2}=\|\cdot\|_{L_2(\widetilde{B}^+_{3R/4}(\rho R,0))}$ and $N = N(d,\delta)$.
Using the fact that $|A^{\alpha\beta}| \le \delta^{-1}$, together with \eqref{eq17_50} and H\"older's inequality, it follows that
\begin{equation}
							\label{eq0329_01}
\|(\bar A^{\alpha\beta}- A^{\alpha\beta})D_\beta  u\|_{L_2(\widetilde{B}^+_{3R/4}(\rho R,0))}\le N\gamma^{\frac 1 {2\nu}}R^{\frac d {2\nu}}
\|Du\|_{L_{2\mu}(\widetilde{B}^+_{3R/4}(\rho R,0))}.
\end{equation}
Since $\chi-1$ is supported on $\{x_1\le 2\rho R\}$, H\"older's inequality implies that
\begin{equation}
							\label{eq0328_06}
\|(\chi-1)Du\|_{L_2(\widetilde{B}^+_{3R/4}(\rho R,0))} \le N\rho^{\frac 1 {2\nu}}R^{\frac d {2\nu}}
\|Du\|_{L_{2\mu}(\Omega_{R})}.
\end{equation}
Using H\"older's inequality again, together with the fact that $\chi'$ is supported on $\{\rho R \le x_1 \le 2\rho R\}$, we have that
\begin{align}
							\label{eq0328_07}
\|\chi' u\|_{L_2(\widetilde{B}^+_{3R/4}(\rho R,0))}
&\le N \rho^{\frac{1}{2\nu}} R^{\frac{d}{2\nu}}
\|\chi' u\|_{L_{2\mu}(\widetilde{B}^+_{3R/4}(\rho R,0))}\nonumber\\
&\le N \rho^{\frac{1}{2\nu}} R^{\frac{d}{2\nu}} \|Du\|_{L_{2\mu}(\Omega_R)},
\end{align}
where the last inequality follows from Hardy's inequality, using the boundary condition $u = 0$ on $\partial\Omega$ and the observation that
$$
|\chi'| \le N(x_1 - \phi(x'))^{-1}
$$
for $(x_1,x') \in \Omega_R$, where $\Omega_R$ is given by $\{ x \in B_R: x_1 > \phi(x')\}$.
The inequalities \eqref{eq0329_01}, \eqref{eq0328_06}, and \eqref{eq0328_07}, together with \eqref{eq3.01}, imply that
\begin{equation}
            \label{eq21.52h}
(|D \hat w|^2)_{\widetilde{B}^+_{3R/4}(\rho R,0)}^{\frac 1 2}
\le N(\rho+\gamma)^{\frac 1 {2\nu}} (|Du|^{2\mu})_{\Omega_R}^{\frac 1 {2\mu}}+
N(f_\alpha^2+g^2)_{\Omega_{R}}^{\frac 1 2}.
\end{equation}
We extend $\hat w$ to be zero  in $\Omega_{3R/4}(\rho R,0)\cap \{x_1<\rho R\}$, so that $\hat w\in W^{1}_2(\Omega_{3R/4}(\rho R,0))$, and we let
$$
w=\hat w+(1-\chi)u.
$$
By the same reasoning as in \eqref{eq0328_06} and \eqref{eq0328_07}, we have that
$$
\|D\left((1-\chi) u\right)\|_{L_2(\Omega_{3R/4}(\rho R,0))} \le N \rho^{\frac{1}{2\nu}} R^{\frac{d}{2 \nu}} \|Du\|_{L_{2\mu}(\Omega_R)}.
$$
From this and \eqref{eq21.52h}, we deduce that
\begin{equation}
            \label{eq18.34h}
(|Dw|^2)_{\Omega_{3R/4}(\rho R,0)}^{\frac 1 2}
\le N(\gamma+\rho)^{\frac 1 {2\nu}} (|Du|^{2\mu})_{\Omega_{R}}^{\frac 1  {2\mu}}+
N(f_\alpha^2+g^2)_{\Omega_{R}}^{\frac 1 2}.
\end{equation}
Note that, because $\kappa \rho\le 1/4$, it holds that
$$
\Omega_r(x_0) \subset \Omega_{3R/4}(\rho R,0)\quad \text{and} \quad |\Omega_{3R/4}(\rho R,0)|/|\Omega_r(x_0)|
\le N(d) \kappa^{d}.
$$
Thus, from \eqref{eq18.34h} we also obtain that
\begin{equation}
                                \label{eq28_01}
(|Dw|^2)_{\Omega_r(x_0)}^{\frac 1 2}
\le N\kappa^{\frac d 2}(\gamma+\rho)^{\frac 1 {2\nu}} (|Du|^{2\mu})_{\Omega_{R}}^{\frac 1 {2\mu}}+N \kappa^{\frac d 2}
(f_\alpha^2+g^2)_{\Omega_{R}}^{\frac 1 2}.
\end{equation}

Next, we define $v= u- w$ $(=\chi u-\hat w)$ in $\Omega_{3R/4}(\rho R,0)$ and $p_2=p-p_1$ in $\widetilde{B}^+_{3R/4}(\rho R,0)$.
From \eqref{eq17.23b} and \eqref{eq0714_03}, it is easily seen that $(v,p_2)$ satisfies
\begin{equation*}
\begin{cases}
\cL_0 v+\nabla p_2 = 0
\quad
&\text{in}\,\,\widetilde{B}^+_{3R/4}(\rho R,0),
\\
\operatorname{div} v = \big(\chi g+\chi' u_1\big)_{\widetilde{B}^+_{3R/4}(\rho R,0)}
\quad
&\text{in}\,\,\widetilde{B}^+_{3R/4}(\rho R,0),
\\
v = 0
\quad
&\text{on} \,\, B_{3R/4}(\rho R,0)\cap \{x_1=\rho R\}.
\end{cases}
\end{equation*}
Denote
$$
\cD_1=\Omega_{r}(x_0)\cap \{x_1\le \rho R\},
\quad
\cD_2=\Omega_{r}(x_0)\cap \{x_1> \rho R\},
\quad\text{and}\,\,
\cD_3=\widetilde{B}^+_{R/4}(\rho R,0).
$$
We have that $\cD_2\subset \cD_3$ and $|\cD_1|\le N\kappa\rho|\Omega_{r}(x_0)|$, where the latter follows from the fact that $\cD_1 = \Omega_r(x_0) \cap \{ - \rho R \le x_1 \le \rho R\}$.
We set
$$
V_i = \sum_{j=1}^d\sum_{\beta = 1}^d \bar{A}_{ij}^{1\beta}D_\beta v_j,
\quad
i = 2, \ldots, d,
$$
where the coefficients $\bar{A}^{1\beta}(x_1)$ are taken from \eqref{eq0715_01}.
Note that $v=V_i=0$ in $\cD_1$.
Then, by applying Lemma \ref{lem0229_2} with a dilation,
we get that
\begin{align}
&\big(|V_i-(V_i)_{\Omega_r(x_0)}|\big)_{\Omega_r(x_0)}
+\big(|D_{x'}v-(D_{x'}v)_{\Omega_r(x_0)}|\big)_{\Omega_r(x_0)}\nonumber\\
&\le N r^{1/2}\big([V_i]_{C^{1/2}(\cD_2)}
+[D_{x'}v]_{C^{1/2}( \cD_2)}\big)
+N\kappa\rho \left(\|V_i\|_{L_\infty(\cD_2)} +\|D_{x'}v\|_{L_\infty(\cD_2)} \right)\nonumber\\
&\le N r^{1/2}\big([V_i]_{C^{1/2}(\cD_3)}
+[D_{x'}v]_{C^{1/2}( \cD_3)}\big)
+N\kappa\rho \left(\|V_i\|_{L_\infty(\cD_3)} +\|D_{x'}v\|_{L_\infty(\cD_3)} \right)\nonumber\\
                        \label{eq5.01}
&\le N(\kappa^{-1/2}+\kappa \rho)(|Dv|^2)_{\Omega_{R/2}(\rho R,0)}^{1/2}.
\end{align}

Now, we set $\cU=(D_{x'}u,\operatorname{div} u, U_2,\ldots,U_d)$, where
\begin{equation}
							\label{eq0328_05}
U_i = \sum_{j=1}^d\sum_{\beta = 1}^d \bar{A}_{ij}^{1\beta}D_\beta u_j,
\quad
i = 2, \ldots, d.
\end{equation}
Note that $\cU$ satisfies \eqref{eq10.54}.
From the triangle inequality, \eqref{eq5.01}, and the fact that $|\Omega_{2R}(x_0)| \le N \kappa^d |\Omega_r(x_0)|$ by the condition that $\kappa \rho \le 1/4$, we have that
\begin{align*}
&\big(|\cU-(\cU)_{\Omega_r(x_0)}|\big)_{\Omega_r(x_0)}
\le N\big(|D_{x'}v-(D_{x'}v)_{\Omega_r(x_0)}|\big)_{\Omega_r(x_0)}\\
&\quad + N \big(|V_i-(V_i)_{\Omega_r(x_0)}|\big)_{\Omega_r(x_0)}
+N(|g|)_{\Omega_r(x_0)}+ N \big(|Dw|\big)_{\Omega_r(x_0)}\\
&\le N(\kappa^{-1/2}+\kappa \rho)(|Dv|^2)_{\Omega_{R/2}(\rho R, 0)}^{1/2}+N(|g|^2)^{1/2}_{\Omega_r(x_0)}
+N \big(|Dw|^2\big)^{1/2}_{\Omega_r(x_0)}\\
&\le N(\kappa^{-1/2}+\kappa \rho)(|Du|^2)_{\Omega_{R/2}(\rho R, 0)}^{1/2}
+N(\kappa^{-1/2}+\kappa \rho)(|Dw|^2)_{\Omega_{R/2}(\rho R, 0)}^{1/2}\\
&\quad +N\kappa^{d/2}(|g|^2)^{1/2}_{\Omega_{2R}(x_0)}+N \big(|Dw|^2\big)^{1/2}_{\Omega_r(x_0)},
\end{align*}
where we bound the second and last terms on the right-hand side of the last inequality by
using \eqref{eq18.34h} and \eqref{eq28_01}.
This completes the proof of Lemma \ref{lem6.5}.
\end{proof}

Before we present the proof of Theorem \ref{thm0307_2}, we note that a Lipschitz domain $\Omega$ in $\bR^d$ satisfying Assumption \ref{assum0620} with $\operatorname{diam}\Omega \le K$ is a space of homogeneous type, which is endowed with the Euclidean distance and a doubling measure $\mu$ that is naturally inherited from the Lebesgue measure. Owing to a result by Christ \cite[Theorem 11]{MR1096400} (also see \cite{HK12}), there exists a filtration of partitions of $\Omega$ in the following sense. For each $n\in\bZ$, there exists a collection of disjoint open subsets $\bC_n:=\{Q_\alpha^n\,:\,\alpha\in I_n\}$ for some index set $I_n$, satisfying the following properties:
\begin{enumerate}
\item For any $n\in \bZ$, $\mu(\Omega\setminus \bigcup_{\alpha}Q_\alpha^n)=0$;
\item For each $n$ and $\alpha\in I_n$, there exists a unique $\beta\in I_{n-1}$ such that $Q_\alpha^n\subset Q_\beta^{n-1}$;
\item For each $n$ and $\alpha\in I_n$, $\text{diam}(Q_\alpha^n)\le N_0\delta_0^n$;
\item Each $Q_\alpha^n$ contains some ball $\Omega_{\varepsilon_0\delta_0^n}(z_\alpha^n)$;
\end{enumerate}
for some constants $\delta_0\in (0,1)$, $\varepsilon_0>0$, and $N_0$ depending only on $d$, $R_0$, and $K$.

For any $f\in L_{1,\text{loc}}(\Omega)$, recall the definition of a maximal function in \eqref{eq0328_01} with a bounded Lipschitz domain $\Omega$ in place of $\bR^d$ or $\bR^d_+$:
$$
\cM f(x)=\sup_{x_0\in \bar{\Omega},\Omega_r(x_0)\ni x}\dashint_{\Omega_r(x_0)}|f(y)|\,dy.
$$
\begin{lemma}
                        \label{lem7.2}
Let $q \in (1,\infty)$ and $\Omega$ satisfy Assumption \ref{assum0620} with $\operatorname{diam}\Omega \le K$. Then, for any $f\in L_q(\Omega)$, we have that
$$
\|\cM f\|_{L_q(\Omega)}\le N\|f\|_{L_q(\Omega)},
$$
where $N>0$ is a constant depending only on $d$, $q$, $R_0$, and $K$.
\end{lemma}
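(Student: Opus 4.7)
The plan is to reduce the statement to the standard Hardy--Littlewood maximal function theorem on a space of homogeneous type. Concretely, I would verify that $\Omega$, equipped with the Euclidean distance and Lebesgue measure restricted to $\Omega$, is doubling with a constant depending only on $d$, $R_0$, and $K$, and then invoke the classical Vitali covering / Marcinkiewicz interpolation machinery, exactly as in \cite[Chapter I]{MR0499948} or in \cite[Section 3]{MR2435520}.

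The first and main step is the doubling estimate
\[
|\Omega_{2r}(x_0)| \le N\,|\Omega_r(x_0)|\qquad\text{for all }x_0\in\bar\Omega,\ r>0,
\]
with $N=N(d,R_0,K)$. This follows once one has a matching lower bound $|\Omega_r(x_0)|\ge c\,r^d$ for $r\le R_0$ uniformly in $x_0\in\bar\Omega$, since trivially $|\Omega_{2r}(x_0)|\le |B_{2r}(x_0)|=N(d)r^d$. To produce the lower bound, I would argue as follows. If $\operatorname{dist}(x_0,\partial\Omega)\ge r/2$, then $B_{r/2}(x_0)\subset \Omega_r(x_0)$ and the bound is immediate. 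Otherwise pick $y\in\partial\Omega$ with $|x_0-y|<r/2$ and apply Assumption \ref{assum0620} at scale $r$ (which holds since $r\le R_0$): in the local coordinate system we have $\Omega\cap B_{r/2}(y)=\{x_1>\phi(x')\}\cap B_{r/2}(y)$ with Lipschitz constant at most $1/16$, so the half-space-like region has measure at least $c(d)r^d$. Since $B_{r/2}(y)\subset B_r(x_0)$, we get $|\Omega_r(x_0)|\ge c(d)r^d$. For $R_0<r\le \operatorname{diam}\Omega$, we combine this with $|\Omega|\le N(d)K^d$ and $|\Omega_r(x_0)|\ge c(d)R_0^d$, while for $r\ge\operatorname{diam}\Omega$ we have $\Omega_{2r}(x_0)=\Omega_r(x_0)=\Omega$ and there is nothing to prove. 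Collecting these cases yields the doubling property with constant depending only on $d$, $R_0$, and $K$.

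Once doubling is in hand, the argument is routine. I would cover the level set $\{\cM f>t\}$ by balls $\Omega_{r_j}(x_j)$ on each of which the average of $|f|$ exceeds $t$; a Vitali selection (valid in any metric measure space with a doubling measure) produces a disjoint subfamily $\{\Omega_{r_{j_k}}(x_{j_k})\}$ whose fivefold dilations cover the level set. The doubling property then gives
\[
|\{\cM f>t\}|\le N\sum_k |\Omega_{r_{j_k}}(x_{j_k})|\le \frac{N}{t}\sum_k\int_{\Omega_{r_{j_k}}(x_{j_k})}|f|\,dy\le \frac{N}{t}\|f\|_{L_1(\Omega)},
\]
i.e.\ the weak-$(1,1)$ bound, with constant $N=N(d,R_0,K)$. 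Combined with the trivial $L_\infty\to L_\infty$ bound and Marcinkiewicz interpolation, this yields the strong $(q,q)$ bound for every $q\in(1,\infty)$ with a constant of the claimed form. The only mild nuisance is that the supremum in $\cM$ is taken over balls with \emph{center} in $\bar\Omega$ containing the point $x$ (an uncentered variant); but any such ball $\Omega_r(x_0)\ni x$ is contained in $\Omega_{2r}(x)\cap\Omega$, whose measure is comparable to $|\Omega_r(x_0)|$ by doubling, so this version of $\cM$ is pointwise comparable to the centered maximal function, and the estimate transfers directly. The main obstacle, as indicated, is really only the verification of the uniform doubling constant at all scales $r$; everything downstream is standard space-of-homogeneous-type theory.
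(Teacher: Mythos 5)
Your proposal is correct and takes essentially the same route as the paper: the paper's proof simply observes that $\Omega$ (with Euclidean distance and Lebesgue measure) is a space of homogeneous type and cites the Hardy--Littlewood maximal function theorem for such spaces \cite{MR0740173} (see also \cite[Theorem 2.2]{DK15}). The only difference is that you spell out the verification of the uniform doubling constant from Assumption \ref{assum0620} and the standard Vitali covering plus Marcinkiewicz interpolation argument, details which the paper delegates to the cited references and to the remarks preceding the lemma.
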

\begin{proof}
Since $\Omega$ is a space of homogeneous type, the lemma follows from the Hardy-Littlewood maximal function theorem for spaces of homogeneous type. See, for instance, \cite{MR0740173}.
Also see \cite[Theorem 2.2]{DK15}.
\end{proof}

\begin{lemma}
                                \label{lem7.3}
Let $q \in (1,\infty)$  and $\Omega$ satisfy Assumption \ref{assum0620} with $\operatorname{diam}\Omega \le K$. Suppose that
$$
F\in L_q(\Omega),\quad G\in L_q(\Omega), \quad H\in L_q(\Omega),\quad |F| \le H,
$$
and that for each $n \in \bZ$ and $Q \in \bC_n$,
there exists a measurable function $F^Q$ on $Q$
such that
\begin{equation}							 \label{eq10.53}
|F| \le F^Q \le H\quad \text{on}\,\, Q\quad\text{and}\,\,
\dashint_Q |F^Q(x) - \left(F^Q\right)_Q| \,dx
\le N_0 G(y)\quad \forall \,y\in Q
\end{equation}
for some constant $N_0>0$.
Then, we have that
\begin{equation}
                                \label{eq12.09}
\|F\|_{L_q(\Omega)}^p
\le NN_0\|G\|_{L_q(\Omega)}\|H\|_{L_q(\Omega)}^{p-1}
+N\|H\|_{L_1(\Omega)}^{p},
\end{equation}
where $N>0$ is a constant depending only on $d$, $q$, $R_0$, and $K$.
\end{lemma}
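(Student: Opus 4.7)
My plan is to combine a Calder\'on--Zygmund stopping time with respect to $H$ and a weighted layer-cake computation, in the spirit of the Fefferman--Stein sharp function theorem. Let $\cM_d$ denote the dyadic maximal function on $\Omega$ with respect to the Christ filtration $\{\bC_n\}_{n \in \bZ}$, and let $n_0$ be the coarsest level whose cubes still live in the filtration. Each such cube has volume $\ge N^{-1}$ with $N = N(d, R_0, K)$, so setting $\lambda_0 := \max_{Q \in \bC_{n_0}} (H)_Q$ gives $\lambda_0 \le N\|H\|_{L_1(\Omega)}$. For $\lambda > \lambda_0$, let $\{Q_j^\lambda\}_j$ be the maximal dyadic cubes satisfying $(H)_{Q_j^\lambda} > \lambda$; their union equals $\{\cM_d H > \lambda\}$ up to measure zero, and by the doubling property of Christ's cubes the parent satisfies $(H)_{\widehat{Q}_j^\lambda} \le \lambda$, hence $(H)_{Q_j^\lambda} \le c_0 \lambda$ with $c_0 = c_0(d, R_0, K)$.

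The heart of the argument is a pointwise inclusion on each stopping cube $Q = Q_j^\lambda$. Since $F^Q \le H$, one has $(F^Q)_Q \le (H)_Q \le c_0\lambda$. Hence, if $x \in Q$ satisfies $|F(x)| > 2c_0\lambda$, then the sandwich $|F| \le F^Q$ yields $F^Q(x) > 2(F^Q)_Q$, so $|F^Q(x) - (F^Q)_Q| > c_0\lambda$. Chebyshev's inequality on $Q$, combined with \eqref{eq10.53} integrated in $y \in Q$, gives
\begin{equation*}
|\{x \in Q : |F(x)| > 2c_0\lambda\}|
\le \frac{1}{c_0\lambda} \int_Q |F^Q - (F^Q)_Q|\, dx
\le \frac{N_0}{c_0\lambda} \int_Q G\, dx.
\end{equation*}
Summing over the disjoint $Q_j^\lambda$ produces the distributional estimate
\begin{equation*}
|\{|F| > 2c_0\lambda\}| \le \frac{N_0}{c_0\lambda} \int_{\{\cM_d H > \lambda\}} G\, dx, \qquad \lambda > \lambda_0.
\end{equation*}

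To conclude, I multiply by $q\lambda^{q-1}$ and integrate over $(\lambda_0, \infty)$. Fubini's theorem converts the right-hand side into
\begin{equation*}
\frac{q N_0}{c_0} \int_\Omega G(x) \int_{\lambda_0}^{\cM_d H(x)} \lambda^{q-2}\, d\lambda\, dx
\le \frac{N N_0}{q-1} \int_\Omega G \cdot (\cM_d H)^{q-1}\, dx,
\end{equation*}
which by H\"older's inequality and Lemma \ref{lem7.2} is bounded by $N N_0 \|G\|_{L_q(\Omega)} \|H\|_{L_q(\Omega)}^{q-1}$. After the substitution $\mu = 2c_0\lambda$, this controls $\int_{2c_0\lambda_0}^\infty q\mu^{q-1}|\{|F|>\mu\}|\, d\mu$, giving the first term of \eqref{eq12.09}. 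The remaining piece
$$
\int_0^{2c_0\lambda_0} q\mu^{q-1}|\{|F|>\mu\}|\, d\mu \le |\Omega| (2c_0\lambda_0)^q
$$
is absorbed into the second term via $\lambda_0 \le N\|H\|_{L_1(\Omega)}$ and $|\Omega| \le N(d) K^d$, where the constant depends on $d, R_0, K, q$. The main point requiring care is not any single step but bookkeeping: one has to verify that the doubling constant of Christ's cubes and the bound on $\lambda_0$ depend only on the advertised parameters $d, q, R_0, K$, which follows from the four properties of the filtration quoted before the lemma.
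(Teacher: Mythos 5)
Your proof is correct, and it takes a genuinely different route from the paper: the paper disposes of this lemma by quoting Theorem 2.4 of \cite{DK15} (the weighted version with $A_q$ weights, observing that $\beta=1$ when there is no weight), whereas you give a self-contained Calder\'on--Zygmund stopping-time argument on Christ's filtration, using the sandwich $|F|\le F^Q\le H$ to convert the mean-oscillation hypothesis \eqref{eq10.53} into the distributional bound $|\{|F|>2c_0\lambda\}|\le N_0(c_0\lambda)^{-1}\int_{\{\cM_d H>\lambda\}}G\,dx$ and then integrating in $\lambda$. What each approach buys: the citation covers the weighted statement, which the authors need for their program, essentially for free; your argument makes the unweighted lemma elementary and transparent, isolating exactly which features of the filtration (nesting, $\operatorname{diam}Q^n_\alpha\le N_0\delta_0^n$, the interior ball giving the bounded parent-to-child measure ratio, and a coarsest nontrivial generation on the bounded domain) and of the hypotheses are used, and in particular explaining why the additive term $N\|H\|_{L_1(\Omega)}^{q}$ appears (it absorbs the range $\lambda\le\lambda_0$). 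Two steps you use implicitly deserve a line each: (i) passing from the sum over the stopping cubes to the full level set $\{|F|>2c_0\lambda\}$ requires $|F|\le H\le \cM_d H\le\lambda$ a.e.\ off $\bigcup_j Q_j^\lambda$, i.e.\ Lebesgue differentiation along the filtration, which is where the hypothesis $|F|\le H$ enters; (ii) Lemma \ref{lem7.2} is stated for the maximal function over sets $\Omega_r(x_0)$, so you need the pointwise comparison $\cM_d H\le N\,\cM H$, which follows from properties (3)--(4) of the cubes (cf.\ \eqref{eq11.23}). Finally, the exponent $p$ appearing in \eqref{eq12.09} is a typo for $q$ inherited from \cite{DK15}, and your argument delivers precisely the intended $q$ version with $N=N(d,q,R_0,K)$.
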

\begin{proof}
This lemma is a special case of Theorem 2.4 of \cite{DK15}, in which $A_q$ weights are considered.
When there is no weight as in the lemma, it is easily seen that $\beta$ in that theorem is equal to $1$.
\end{proof}

We are now ready to present the proof of Theorem \ref{thm0307_2}.

\begin{proof}[Proof of Theorem \ref{thm0307_2}]
We only derive the a priori estimate \eqref{eq0328_02}. The solvability then follows from \eqref{eq0328_02}, the Poincar\'e inequality, and the method of continuity.
Furthermore, we assume that $f \equiv 0$.
Otherwise, for $B_R \supseteq \Omega$, we find $w \in W_{q_1}^2(B_R)$ such that $\Delta w = f 1_{\Omega}$ in $B_R$ and $w|_{\partial B_R} = 0$.
Then, we consider
$$
\cL u + \nabla p = D_\alpha\left( D_\alpha w + f_\alpha\right),
$$
where from the Sobolev embedding theorem and the well-known $L_{q_1}$-estimate for the Laplace equation we have that
$$
\|D_\alpha w\|_{L_q(\Omega)} \le \|D_\alpha w\|_{L_q(B_R)} \le N \|w\|_{W_{q_1}^2(B_R)}
\le N\|f\|_{L_{q_1}(\Omega)}.
$$
We consider the two cases with $q >2$ and $q\in (1,2)$. The case with $q=2$ follows from Lemma \ref{lem0225_1}.

\noindent{\bf Case 1}: $q>2$.
We take $\mu\in (1,\infty)$, depending only on $q$, such that $2\mu<q$, and we let $\kappa\ge 32$ be a constant to be specified. By the properties (3) and (4) described above, for each $Q$ in the partitions
there exist $r \in (0,\infty)$ and $x_0 \in \bar\Omega$ such that
\begin{equation}
                                \label{eq11.23}
Q \subset \Omega_r(x_0)\quad\text{and}
\quad |\Omega_r(x_0)| \le N |Q|,
\end{equation}
where $N$ depends only on $d$, $R_0$, and $K$. See Remark 7.3 in \cite{DK15}.
In order to apply Lemma \ref{lem7.3}, we take $F=|Du|$, $H=N|Du|$, where $N = N(d,\delta,q) \ge 1$ from \eqref{eq10.54}, and
\begin{align*}
G(y) &= (\kappa^{-\frac 1 2} + \kappa \rho) \left[ \cM(|Du|^{2})(y)\right]^{\frac{1}{2}}
+ \kappa^{\frac{d}{2}} \left[ \cM(f_\alpha^{2})(y)\right]^{\frac{1}{2}}\\
&\quad +\kappa^{\frac{d}{2}}\left[ \cM(g^{2})(y)\right]^{\frac{1}{2}}+ \kappa^{\frac{d}{2}} (\gamma+\rho)^{\frac{1}{2\nu}} \left[ \cM (|Du|^{2\mu})(y) \right]^{\frac{1}{2\mu}}+R_1^{-d}\kappa^{d}\|Du\|_{L_1(\Omega)}.
\end{align*}
For $F^Q$, we consider two cases. When $\kappa r\le R_1$, we choose $F^Q=\cU$, where $\cU$ is from Lemma \ref{lem6.5}.
Thanks to \eqref{eq10.54}, \eqref{eq3.33}, and \eqref{eq11.23}, we have that \eqref{eq10.53} holds with $N_0$ depending only on $d$, $\delta$, $R_0$, $K$, and $q$. Otherwise, i.e., if $r>R_1/\kappa$ we take $F^Q=|Du|$.
Then, by \eqref{eq11.23} we have
$$
\dashint_Q |F^Q(x) - \left(F^Q\right)_Q| \,dx
\le N\dashint_{\Omega_r(x_0)} |Du| \,dx
\le N|\Omega_{R_1/\kappa}(x_0)|^{-1}\|Du\|_{L_1(\Omega)},
$$
where $N = N(d, R_0, K)$.
Since $|\Omega_{R_1/\kappa}(x_0)|^{-1}\le NR_1^{-d}\kappa^{d}$, we still get that \eqref{eq10.53} holds with $N_0$ depending only on $d$, $R_0$, and $K$. Therefore, the conditions in Lemma \ref{lem7.3} are satisfied. From \eqref{eq12.09}, we obtain that
\begin{align*}
&\|Du\|_{L_q(\Omega)}\le N\|G\|_{L_q(\Omega)}+N\|Du\|_{L_1(\Omega)}\\
&\le NR_1^{-d}\kappa^{d}\|Du\|_{L_1(\Omega)}+N(\kappa^{-\frac 1 2} + \kappa \rho)
\|\cM(|Du|^{2})^{\frac{1}{2}}\|_{L_q(\Omega)}
+N\kappa^{\frac{d}{2}}\|\cM(f_\alpha^{2})^{\frac{1}{2}}\|_{L_q(\Omega)}\\
&\quad +N\kappa^{\frac{d}{2}}\|\cM(g^{2})^{\frac{1}{2}}\|_{L_q(\Omega)}
+N\kappa^{\frac{d}{2}} (\gamma+\rho)^{\frac{1}{2\nu}}
\|\cM(|Du|^{2\mu})^{\frac{1}{2\mu}}\|_{L_q(\Omega)}\\
&= NR_1^{-d}\kappa^{d}\|Du\|_{L_1(\Omega)}+N(\kappa^{-\frac 1 2} + \kappa \rho)
\|\cM(|Du|^{2})\|_{L_{q/2}(\Omega)}^{\frac{1}{2}}
+N\kappa^{\frac{d}{2}}\|\cM(f_\alpha^{2})\|^{\frac{1}{2}}_{L_{q/2}(\Omega)}\\
&\quad +N\kappa^{\frac{d}{2}}\|\cM(g^{2})\|^{\frac{1}{2}}_{L_{q/2}(\Omega)}
+N\kappa^{\frac{d}{2}} (\gamma+\rho)^{\frac{1}{2\nu}}
\|\cM(|Du|^{2\mu})\|^{\frac{1}{2\mu}}_{L_{q/{2\mu}}(\Omega)},
\end{align*}
where $N = N(d,\delta,R_0,K,q)$.
By Lemma \ref{lem7.2}, the right-hand side above is bounded by
\begin{align*}
&NR_1^{-d}\kappa^{d}\|Du\|_{L_1(\Omega)}+N(\kappa^{-\frac 1 2} + \kappa \rho)
\||Du|^{2}\|_{L_{q/2}(\Omega)}^{\frac{1}{2}}
+N\kappa^{\frac{d}{2}}\|f_\alpha^{2}\|^{\frac{1}{2}}_{L_{q/2}(\Omega)}\\
&\quad +N\kappa^{\frac{d}{2}}\|g^{2}\|^{\frac{1}{2}}_{L_{q/2}(\Omega)}
+N\kappa^{\frac{d}{2}} (\gamma+\rho)^{\frac{1}{2\nu}}
\||Du|^{2\mu}\|^{\frac{1}{2\mu}}_{L_{q/{2\mu}}(\Omega)}\\
&=NR_1^{-d}\kappa^{d}\|Du\|_{L_1(\Omega)}+N(\kappa^{-\frac 1 2} + \kappa \rho)
\|Du\|_{L_q(\Omega)}
+N\kappa^{\frac{d}{2}}\|f_\alpha\|_{L_q(\Omega)}\\
&\quad +N\kappa^{\frac{d}{2}}\|g\|_{L_q(\Omega)}
+N\kappa^{\frac{d}{2}} (\gamma+\rho)^{\frac{1}{2\nu}}
\|Du\|_{L_q(\Omega)}.
\end{align*}
Upon taking sufficiently large $\kappa$, then sufficiently small $\gamma$ and $\rho$, depending on $d$, $\delta$, $R_0$, $K$, and $q$ (but independent of $R_1$), so that
$$
N(\kappa^{-\frac 1 2} + \kappa \rho)+N\kappa^{\frac{d}{2}} (\gamma+\rho)^{\frac{1}{2\nu}}\le 1/2,
$$
we get that
\begin{equation}
							\label{eq0328_04}
\|Du\|_{L_q(\Omega)}\le NR_1^{-d}\kappa^{d}\|Du\|_{L_1(\Omega)}+N\|f_\alpha\|_{L_q(\Omega)}+N\|g\|_{L_q(\Omega)}.
\end{equation}
Since $\Omega$ is bounded, we have that $f_\alpha,g\in L_2(\Omega)$. Thus, by Lemma \ref{lem0225_1} (also see Remark \ref{rem0229_1}) and H\"older's inequality,
$$
\|Du\|_{L_2(\Omega)} + \|p\|_{L_2(\Omega)} \le N \left(\|f_\alpha\|_{L_q(\Omega)} + \|g\|_{L_q(\Omega)} \right),
$$
where $N=N(d,\delta,R_0, K, \rho,q)$. Combining this with \eqref{eq0328_04} yields that
\begin{equation}
                    \label{eq10.09}
\|Du\|_{L_q(\Omega)} \le N \left( \|f_\alpha\|_{L_q(\Omega)} + \|g\|_{L_q(\Omega)} \right),
\end{equation}
where $N = N(d,\delta,R_0,R_1,K,q)$.
Next, we estimate $p$. For any $\eta\in L_{q'}(\Omega)$ with $q'=q/(q-1)$, it follows from the solvability of the divergence equation in Lipschitz domains (cf. \cite{MR2101215}) that there exists $\psi\in \mathring{W}_{q'}^1(\Omega)^d$ such that
\begin{equation}
                            \label{eq10.10}
\operatorname{div} \psi = \eta-(\eta)_\Omega
\quad
\text{in}
\,\,
\Omega
\quad
\text{and}
\quad
\|D\psi\|_{L_{q'}(\Omega)} \le N \|\eta\|_{L_{q'}(\Omega)},
\end{equation}
where $N>0$ is a constant depending only on $d$, $R_0$, $K$, and $q'$.
We test the first equation of \eqref{eq0307_04} by $\psi$, and using the fact that $(p)_\Omega = 0$ we obtain
$$
\int_\Omega p\eta\,dx=\int_\Omega p\big(\eta-(\eta)_\Omega\big)\,dx
=\int_\Omega D_\alpha \psi \cdot (f_\alpha-A^{\alpha\beta} D_\beta u) \,dx,
$$
which combined with \eqref{eq10.09} and \eqref{eq10.10} yields
$$
\left|\int_\Omega p\eta\,dx\right|\le N\left( \|f_\alpha\|_{L_q(\Omega)} + \|g\|_{L_q(\Omega)} \right)\|\eta\|_{L_{q'}(\Omega)}.
$$
Since $\eta\in L_{q'}(\Omega)$ is arbitrary, we can infer that
$$
\|p\|_{L_q(\Omega)} \le N \left( \|f_\alpha\|_{L_q(\Omega)} + \|g\|_{L_q(\Omega)} \right),
$$
which together with \eqref{eq10.09} implies that \eqref{eq0328_02} holds.

\noindent{\bf Case 2}: $q\in (1,2)$. We employ a duality argument. Let $q'=q/(q-1)\in (2,\infty)$ and $(\gamma, \rho) = (\gamma, \rho)(d,\delta,R_0,K,q')$ from Case 1. Then, for any $\eta=(\eta_{\alpha})$, where $\eta_\alpha\in L_{q'}(\Omega)^d$ for $\alpha=1,\ldots,d$, there exists a unique solution $(v,\pi) \in W_{q'}^{1}(\Omega)^d \times L_{q'}(\Omega)$ with $(\pi)_\Omega = 0$ satisfying
$$
\begin{cases}
D_\beta(A^{\alpha\beta}_{\operatorname{tr}} D_\alpha v) + \nabla \pi = D_\alpha \eta_\alpha
\quad
&\text{in}\,\,\Omega,
\\
\operatorname{div} v = 0
\quad
&\text{in}\,\,\Omega,
\\
v= 0\quad
&\text{on}\,\,\partial\Omega,
\end{cases}
$$
where $A^{\alpha\beta}_{\operatorname{tr}}$ is the transpose of the matrix $A^{\alpha\beta}$ for each $\alpha,\beta=1,\ldots,d$.
Moreover, we have that
\begin{equation}
                                \label{eq10.55}
\|Dv\|_{L_{q'}(\Omega)} + \|\pi\|_{L_{q'}(\Omega)} \le N \|\eta_\alpha\|_{L_{q'}(\Omega)},
\end{equation}
where $N = N(d,\delta,R_0,R_1,K,q')$. Then, we test the equation of $(v,\pi)$ by $u$, to obtain
$$
\int_\Omega  \eta_\alpha  \cdot D_\alpha u\,dx=\int_\Omega \big( D_\beta u \cdot A^{\alpha\beta}_{\operatorname{tr}}  D_\alpha v +\pi\, \operatorname{div} u\big)\,dx=
\int_\Omega \big(f_\alpha \cdot D_\alpha v+\pi g\big)\,dx.
$$
From this and \eqref{eq10.55}, we get that
$$
\left|\int_\Omega \eta_\alpha \cdot D_\alpha u\,dx\right|\le N\|\eta_\alpha\|_{L_{q'}(\Omega)}\big(\|f_\alpha\|_{L_q(\Omega)}
+\|g\|_{L_q(\Omega)}\big).
$$
Since $\eta\in \big(L_{q'}(\Omega)\big)^{d^2}$ is arbitrary, we obtain that
\eqref{eq10.09} holds. The estimate of $p$ is the same as in Case 1. Thus, the theorem is proved.
\end{proof}

%

\bibliographystyle{plain}

\def\cprime{$'$}

\end{document}